\newtheorem{theorem}{Theorem}
\newtheorem{corollary}[theorem]{Corollary}
\newtheorem{lemma}[theorem]{Lemma}
\begin{document}
\title[Explicit evaluations of the Hankel determinants]{Explicit evaluations of the Hankel determinants of a Thue--Morse-like sequence}
\date{May 4, 2014}
\author{Guo-Niu HAN}
\address{Institut de Recherche Math\'ematique Avanc\'ee\\
Universit\'e de Strasbourg et CNRS\\
7 rue Ren\'e-Descartes\\
 67084 Strasbourg\\
France}
\email{guoniu.han@unistra.fr}
\author{Wen WU}
\address{Department of Mathematics, Hubei University, 430062, Wuhan, P. R.
China}
\email[Corresponding author]{hust.wuwen@gmail.com}
\thanks{This research is supported by NSFC (Grant Nos. 11371156).}
\thanks{Wen Wu is the corresponding author.}
\keywords{Hankel determinant, Thue--Morse sequence, Thue--Morse-like sequence, automatic sequence}
\subjclass[2010]{05A15, 11B37, 11B85, 11C20, 15A15}
\begin{abstract}
We obtain the explicit evaluations of the Hankel determinants of the
formal power series
$\prod_{k\geq 0}(1+Jx^{3^{k}})$
where $J={(\sqrt{-3}-1)}/2$,
and prove that the sequence of Hankel determinants is an aperiodic automatic 
sequence taking value in $\{0, \pm 1, \pm J, \pm J^2\}$.
This research is essentially inspired by the works
about Hankel determinants of Thue--Morse-like sequences by
Allouche, Peyri\`ere, Wen and Wen (1998), Bacher (2006) and the first author (2013).
\end{abstract}
\maketitle

\section{Introduction} 
In 1998, Allouche, Peyri\`ere, Wen and Wen
proved that all the Hankel determinants of the Thue--Morse sequence
\begin{equation}
P_2(x)= \prod_{k\geq 0}(1-x^{2^{k}})
\end{equation}
are nonzero
by using
determinant manipulation \cite{APWW},
which consists of proving sixteen recurrent relations between determinants.
Recently, the first author derived
a short proof of APWW's result
by using the Jacobi continued fraction expansion of the undelying sequence \cite{Han}.
Moreover, he proved that
all the Hankel determinants of the following Thue--Morse-like sequence
\begin{equation}
P_3(x)= \prod_{k\geq 0}(1-x^{3^{k}})
\end{equation}
are nonzero.
Those results about Hankel determinants
have been shown to
have useful applications in Number Theory for
studing the irrationality exponents of automatic numbers (see \cite{Bug, GWW}).

Notice that the Hankel determinants of $P_2(x)$ and $P_3(x)$
do not have any closed-form expressions.
The trick to study such determiants
is using the modular arithmetic.
Surprisingly enough,
Bacher obtained the explicit evaluations
of the Hankel determinants of the following
Thue--Morse-like sequence
\begin{equation}
	P_2(x; I)=\prod_{k\geq 0} (1+Ix^{2^k}),
\end{equation}
where $I$ is the imaginary unit \cite{Ba06, Ba06a}.
Bacher's method is based on the {\it category Rec} introduced by himself.
Inspired by the above three results,
we derived the explicit evaluations of the
Hankel determinants of the following Thue--Morse-like sequence
\begin{equation}
	P_3(x;J)=\prod_{k\geq 0} (1+Jx^{3^k}),
\end{equation}
where $J=(I\sqrt3 -1)/2$.
Our proof is based on APWW's method by using
direct determinant manipulations.
\medskip

Consider the sequence $\boldsymbol{c}%
=(c_{0},c_{1},c_{2},\cdots )$ defined by the generating function
\begin{eqnarray}
&&	P_3(x;J)=\prod\limits_{k\geq 0}(1+Jx^{3^{k}})
	=c_{0}+c_{1}x+c_{2}x^{2}+\cdots \label{eq:defP3J}\\
 &=&1+{ J}x+{ J}{x}^{3}+{{ J}}^{2}{x}^{4}+{ J}{x
}^{9}+{{ J}}^{2}{x}^{10}
+{{ J}}^{2}{x}^{12}+{x}^{13}+{
J}{x}^{27}+\cdots \nonumber
\end{eqnarray}%
It is easy to show that
the sequence $\boldsymbol c$ can be characterized by the following recurrence relations
\begin{equation} \label{rec:c}
c_{0}=1,\ \ c_{3n}=c_{n},\ \ c_{3n+1}=Jc_{n},\ \ c_{3n+2}=0,\ \
\text{($n\geq 0$)}%
\end{equation}%
or equivalently by the morphism $\sigma $ over alphabet $\mathcal{A}%
=\{0,1,J,J^{2}\}$ where $\sigma $ is defined as follow%
\begin{equation*}
1\mapsto 1J0,\ \ J\mapsto JJ^{2}0,\ \ J^{2}\mapsto J^{2}10,\ \ 0\mapsto 000.
\end{equation*}%
Thus $\boldsymbol{c}$ is an \emph{automatic sequence}, i.e., it can be generated by a finite automaton  (see \cite{ALL}).

\bigskip

Recall that for each sequence of complex numbers
$\mathbf{u}=(u _{k})_{k=0,1,\ldots}$
the corresponding $(p,n)$-order
\emph{Hankel matrix}
$H_{n}^{p}(\mathbf u)$ is given by
\begin{equation}\label{def:H}
H_{n}^{p}(\mathbf u)=%
\begin{pmatrix}
u_{p} & u_{p+1} & \cdots & u_{p+n-1} \\
u_{p+1} & u_{p+2} & \cdots & u_{p+n} \\
\cdots & \cdots & \cdots & \cdots \\
u_{p+n-1} & u_{p+n} & \cdots & u_{p+2n-2}%
\end{pmatrix},%
\end{equation}%
where $n\geq 1$ and $p\geq 0$.
The {\it Hankel determinant} $|H_{n}^{p}(\mathbf u)|$ is simply  the determinant of
the Hankel matrix $H_{n}^{p}(\mathbf u)$. By convention, $|H_{0}^{p}(\mathbf u)|=1$.

\medskip
Our main result about the Hankel determinants of $\boldsymbol c$ is stated next.

\begin{theorem}
\label{thmH}
Let $H_{n}^{p}:=H_n^p(\boldsymbol c)$ be the $(p,n)$-order Hankel matrix of the Thue--Morse-like sequence
$\boldsymbol{c}$ defined in (\ref{eq:defP3J}).
Then,
the Hankel determinants $|H_n^0|$ and $|H_n^1|$
are characterized by the following recurrence relations
\begin{equation}\label{rec:H0}
\left\{
\begin{array}{cll}
|H_0^0|&=&1,   \\
	|H_{1}^{0}|  &=&  1,  \\
	|H_{3n}^{0}| &=& |H_{n}^{0}|\,, \\
	|H_{3n+1}^{0}| & = & |H_{n+1}^{0}|\,, \\
	|H_{3n+2}^{0}| &=&-J^2|H_{n+1}^{0}|\, \\
\end{array}%
\right.
\end{equation}
and
\begin{equation}\label{rec:H1}
\left\{
\begin{array}{cll}
|H_{0}^{1}|&=&1,   \\
|H_{3n}^{1}|&=&  |H_{n}^{1}|,\\
|H_{3n+1}^{1}|&=& J|H_{n}^{1}|,\\
|H_{3n+2}^{1}|&=& J |H_{n+1}^{1}|\\
\end{array}%
\right.
\end{equation}
for all  $n\geq 0$.
\end{theorem}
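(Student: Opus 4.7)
The plan is to follow APWW's direct determinant manipulation strategy, proving (\ref{rec:H0}) and (\ref{rec:H1}) simultaneously by strong induction on $n$. Two features of $\boldsymbol c$ will drive every reduction: the base-$3$ self-similarity $c_{3n}=c_{n}$, $c_{3n+1}=Jc_{n}$ coming from (\ref{rec:c}), and the sparsity relation $c_{3n+2}=0$. The former produces useful linear dependencies between consecutive rows and columns of $H_{N}^{p}$ (the operation $R_{3k+1}-JR_{3k}$ annihilates the $c_{3m+1}-Jc_{m}$ pattern), and the latter automatically kills every third anti-diagonal of $H_{N}^{p}$.

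First I would analyse the sparsity pattern of $H_{3n}^{0},\,H_{3n+1}^{0},\,H_{3n+2}^{0}$ and of $H_{3n}^{1},\,H_{3n+1}^{1},\,H_{3n+2}^{1}$. Because $c_{p+i+j}$ vanishes whenever $p+i+j\equiv 2\pmod 3$, each matrix has a predictable banded pattern of zero anti-diagonals. I would then apply the row operations $R_{3k+1}-J\,R_{3k}$ (and their column analogues) to every paired block, reducing the original matrix to a block-triangular form whose nontrivial diagonal block is, up to reindexing and a scalar, a smaller Hankel-type matrix of the same sequence $\boldsymbol c$.

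A single application of these operations typically produces not an honest Hankel matrix but a Hankel matrix with one or two rows or columns replaced by special combinations. Consequently I would introduce a small family of auxiliary ``modified'' Hankel determinants $\{D_{n}^{(\alpha)}\}$ encompassing the modifications that actually arise, and prove a reduction identity for each $D_{n}^{(\alpha)}$ in terms of the $D_{m}^{(\beta)}$ under each of the six substitutions $n\mapsto 3n,\,3n+1,\,3n+2$ with $p\in\{0,1\}$. Tracking the scalar prefactors -- powers of $J$ from row scalings and signs from row permutations -- should then yield precisely the coefficients $1,\,1,\,-J^{2}$ of (\ref{rec:H0}) and $1,\,J,\,J$ of (\ref{rec:H1}). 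The induction is anchored by the direct evaluations $|H_{0}^{0}|=|H_{0}^{1}|=1$ and $|H_{1}^{0}|=c_{0}=1$.

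The main obstacle will be the identification and closure of the auxiliary family in the preceding step. APWW needed sixteen coupled relations in the base-$2$, sign-valued setting; with base $3$ and values in the richer group $\{0,\pm 1,\pm J,\pm J^{2}\}$ the combinatorics should be correspondingly heavier. Ensuring that every reduction lands back inside the chosen family, and that all $J$-factors and signs track correctly through cascading row and column operations, will be the most delicate point of the argument. Once this closed family and its reduction table are in place, the determinantal identities themselves reduce to routine (if lengthy) elementary manipulations.
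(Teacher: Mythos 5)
Your overall strategy -- APWW-style determinant manipulation, splitting the Hankel matrix according to residues mod $3$, row/column operations exploiting $c_{3n}=c_n$, $c_{3n+1}=Jc_n$, $c_{3n+2}=0$, and a coupled induction over a closed family of determinants -- is exactly the route the paper takes. But your plan stops precisely at the point you yourself flag as ``the main obstacle'': you never identify what the closed auxiliary family actually is, and that identification is the substance of the proof. The paper's answer is that the family is \emph{not} a collection of Hankel matrices with modified rows or columns, as you anticipate; it consists of the ordinary Hankel determinants $|H_n^p|$ together with the Hankel determinants $|\Sigma_n^p|$ of the \emph{sum sequence} $s_n=c_n+c_{n+1}$, which enters because the block reductions produce the matrix $H_n^p+H_n^{p+1}=\Sigma_n^p$ on the diagonal, and this is again an honest Hankel matrix of an explicit $3$-automatic sequence (satisfying $s_{3n}=-J^2c_n$, $s_{3n+1}=Jc_n$, $s_{3n+2}=c_{n+1}$). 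Concretely, the paper conjugates $H_{3n+r}^{p}$ by a permutation matrix $P$ to get a $3\times3$ block matrix whose blocks are $K_n^q=(c_{q+3(i+j-2)})$, uses $K_n^{3p}=H_n^p$, $K_n^{3p+1}=JH_n^p$, $K_n^{3p+2}=\mathbf{0}$, and then block-column operations yield eighteen identities $(L1)$--$(L18)$ expressing each of $|H_{3n+r}^{3p+t}|$ and $|\Sigma_{3n+r}^{3p+t}|$ as a signed monomial in smaller $|H|$'s and $|\Sigma|$'s.

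There is a second missing idea that your phrase ``tracking the scalar prefactors should then yield precisely the coefficients'' glosses over. The raw reductions do not directly give the clean recurrences of the theorem: for instance one obtains $|H_{3n}^{0}|=(-1)^{n}|H_{n}^{0}|\cdot|H_{n}^{1}|\cdot|\Sigma_{n}^{0}|$, and to conclude $|H_{3n}^{0}|=|H_{n}^{0}|$ you need the nontrivial identity $(-1)^{n}|H_{n}^{1}|\cdot|\Sigma_{n}^{0}|=1$ (and likewise $(-1)^{n}|H_{n+1}^{0}|\cdot|\Sigma_{n}^{1}|=1$). The paper proves these by showing that the products $p_{n}=(-1)^{n}|H_{n}^{1}||\Sigma_{n}^{0}|$ and $q_{n}=(-1)^{n}|H_{n+1}^{0}||\Sigma_{n}^{1}|$ satisfy purely multiplicative recurrences ($p_{3n}=p_n^3$, $p_{3n+1}=p_nq_n^2$, etc.), whence $p_n=q_n=1$ by induction. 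Without both of these ingredients -- the sum sequence $\boldsymbol{s}$ closing the family, and the product identities collapsing the monomials -- your plan cannot be executed as stated, so as it stands it is an accurate description of the shape of the argument but not a proof.
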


The first values of the Hankel determinants $|H_n^0|$ and $|H_n^1|$
are reproduced in the following table.
\begin{equation*}
	\begin{array}{*{14}c}
		n      &=& 0 & 1 & 2 & 3 & 4 & 5 & 6 & 7 & 8 & 9 & 10 & 11  \\
		|H_n^0|  &=&1&1&-J^2&1&-J^2&J&-J^2&1& -J^2&1&-J^2&J \\
		|H_n^1|  &=&1&J&J^2&J&J^2&1&J^ 2&1&J^2&J&J^2&1\\
\end{array}%
\end{equation*}
\medskip

Consider the sequence $\boldsymbol{s}%
=(s_{0},s_{1},s_{2},\cdots )$ defined by
$s_{n}=c_{n}+c_{n+1}$. We have
\begin{eqnarray}
	&&	S(x)=\frac{(1+x)P_3(x;J) -1}{x}
	=s_{0}+s_{1}x+s_{2}x^{2}+\cdots \qquad\qquad \label{eq:defS3J}\\
\nonumber &=&
-{{ J}}^{2}+{ J}x+{ J}{x}^{2}-{x}^{3}+{{ J}}^{
2}{x}^{4}
+{ J}{x}^{8}-{x}^{9}+{{ J}}^{2}{x}^{10}
+ \cdots \qquad \nonumber
\end{eqnarray}
and
\begin{equation}
s_{3n}=-J^{2}c_{n},~s_{3n+1}=Jc_{n},~s_{3n+2}=c_{n+1}.  \label{rec:s}
\end{equation}%
The Hankel matrices of the sequences $\mathbf{s}$ are denoted by $\Sigma
_{n}^{p}$. An easy observation shows that
\begin{equation}
\Sigma _{n}^{p}=H_{n}^{p}+H_{n}^{p+1}.
\end{equation}

\begin{theorem}
\label{thmSigma}
The Hankel determinants $|\Sigma_n^0|$ and $|\Sigma_n^1|$
are characterized by the following recurrent relations
\begin{equation}
\left\{
\begin{array}{cll}
|\Sigma_0^0|&=&1,   \\
	|\Sigma_{3n}^{0}| &=& |\Sigma_{n}^{0}|\,, \\
	|\Sigma_{3n+1}^{0}| & = & -J^2|\Sigma_{n}^{0}|\,, \\
	|\Sigma_{3n+2}^{0}| &=&-J^2|\Sigma_{n+1}^{0}|\, \\
\end{array}%
\right.  \label{Sreur0}
\end{equation}
and
\begin{equation}
\left\{
\begin{array}{cll}
|\Sigma_{0}^{1}|&=&1,   \\
|\Sigma_{3n}^{1}|&=&  |\Sigma_{n}^{1}|,\\
|\Sigma_{3n+1}^{1}|&=& J|\Sigma_{n}^{1}|,\\
|\Sigma_{3n+2}^{1}|&=& |\Sigma_{n}^{1}|\\
\end{array}%
\right.  \label{Sreur1}
\end{equation}
for all  $n\geq 0$.
\end{theorem}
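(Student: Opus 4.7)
The plan is to prove Theorem~\ref{thmSigma} by direct determinant manipulation on the Hankel matrices of $\mathbf{s}$, in the same spirit as the approach of APWW and the proof of Theorem~\ref{thmH}, but working with the ternary recurrence (\ref{rec:s}) for $\mathbf{s}$ in place of (\ref{rec:c}) for $\mathbf{c}$. For each residue class of $N$ modulo~$3$ and each $p\in\{0,1\}$, I would establish the corresponding relation in (\ref{Sreur0})--(\ref{Sreur1}) by a sequence of row and column operations that collapses the full $N\times N$ determinant $|\Sigma_N^p|$ to a constant times a single smaller Hankel determinant of $\mathbf{s}$.

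First I would reorder the rows and columns of $\Sigma_N^p$ so that indices congruent to $0$, $1$, and $2$ modulo~$3$ appear in three consecutive groups, producing a $3\times 3$ block form. By (\ref{rec:s}) each entry $s_{i+j+p}$ equals one of the scalars $-J^{2}$, $J$, or $1$ times a value $c_{\alpha+\beta+\delta}$, where $\alpha=\lfloor i/3\rfloor$ and $\beta=\lfloor j/3\rfloor$ and $\delta\in\{0,1\}$ is a shift determined by the residues of $i+j+p$ modulo~$3$. Consequently the nine blocks are themselves scaled sub-Hankel matrices of~$\mathbf{c}$ with controlled index shifts, and the coefficient pattern across blocks is highly constrained.

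Next I would exploit the identity $1+J+J^{2}=0$ to form linear combinations of block rows (and similarly of block columns) whose off-diagonal block entries cancel, reducing the $3\times 3$ block matrix to a block-triangular form. After such reduction the determinant factors as an explicit power of $J$ (up to sign) times the determinant of a single residual $m\times m$ block. Rearranging the rows and columns of that block back into natural consecutive order, one recognizes it directly as a Hankel matrix of $\mathbf{s}$ of size $m$ or $m+1$ at shift $p$, and the scalar extracted during the reduction matches exactly the prefactor $1$, $J$, or $-J^{2}$ predicted by (\ref{Sreur0}) and (\ref{Sreur1}). The base cases $n=0$, together with the small cases $N=1,2$ needed for the induction to launch, are verified by direct computation.

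The step I expect to be the main obstacle is the row/column reduction in the previous paragraph. The three scalars $-J^{2}$, $J$, $1$ attached to the residue classes of $\mathbf{s}$ in (\ref{rec:s}) are all distinct, and the third one comes with a nontrivial index shift $k\mapsto k+1$; the natural block elementary combinations therefore mix different shifts and fail to cancel unless one uses the cube-root relation $1+J+J^{2}=0$ delicately. Isolating the precise linear combinations that both annihilate the cross blocks and leave a recognizable Hankel matrix of $\mathbf{s}$ is the technical core of the proof, while the distinction between the three residue cases of $N$ and the two values of $p\in\{0,1\}$ merely multiplies the bookkeeping without changing the underlying mechanism.
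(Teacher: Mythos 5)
Your setup (permuting rows and columns by residue mod~$3$ to get a $3\times 3$ block matrix whose blocks are scaled Hankel matrices of $\mathbf{c}$, then using block column operations built on $1+J+J^{2}=0$ to triangularize) is exactly the mechanism the paper uses. But your final step contains a genuine gap: you claim the reduction collapses $|\Sigma_N^p|$ to a scalar times \emph{a single} smaller Hankel determinant of $\mathbf{s}$ at the same shift $p$. That is not what the block triangularization produces. A block-triangular $3N\times 3N$ matrix has determinant equal to the \emph{product of its three diagonal block determinants}, and in this computation those diagonal blocks are a mixture of Hankel matrices of $\mathbf{c}$ and of $\mathbf{s}$ at shifts $p$ and $p+1$. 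For instance, for $\Sigma_{3n}^{0}$ the reduction yields $(-1)^{n}\,|\Sigma_{n}^{0}|^{2}\cdot|H_{n}^{1}|$, not $|\Sigma_{n}^{0}|$; the other cases similarly produce three-factor products such as $(-1)^{n+1}J^{2}\,|H_{n+1}^{0}|\cdot|\Sigma_{n}^{0}|\cdot|\Sigma_{n}^{1}|$. So the recursion does not close within the single sequence $(|\Sigma_n^0|)_n$ (or $(|\Sigma_n^1|)_n$), and your induction cannot launch as described.

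What is missing is the extra identity that converts these products into the clean one-term recurrences of the theorem, namely $|H_{n}^{1}|\cdot|\Sigma_{n}^{0}|=(-1)^{n}$ and $|H_{n+1}^{0}|\cdot|\Sigma_{n}^{1}|=(-1)^{n}$. The paper obtains this by first proving the full coupled system of eighteen product recurrences for $|H_N^q|$ and $|\Sigma_N^q|$ (Lemma~\ref{keylemma}), then observing that the combinations $A_{n}^{p}=(-1)^{n}|H_{n}^{p+1}|\,|\Sigma_{n}^{p}|$ and $B_{n}^{p}=(-1)^{n}|H_{n+1}^{p}|\,|\Sigma_{n}^{p+1}|$ satisfy purely multiplicative recurrences with value $1$ at the base cases, whence $A_n^0=B_n^0=1$ for all $n$ by induction. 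Only after this does Theorem~\ref{thmSigma} follow, as a corollary of Theorem~\ref{thmH} via these product identities. Your determinant manipulations are the right engine, but you need to anticipate that they output coupled products of $|H|$'s and $|\Sigma|$'s, and you need a separate argument to decouple them; as written, the step ``one recognizes it directly as a Hankel matrix of $\mathbf{s}$ of size $m$ or $m+1$ at shift $p$'' would fail.
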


The first values of the Hankel determinants $|\Sigma_n^0|$ and $|\Sigma_n^1|$
are reproduced in the following table.
\begin{equation*}
	\begin{array}{*{14}c}
		n      &=& 0 & 1 & 2 & 3 & 4 & 5 & 6 & 7 & 8 & 9 & 10 & 11  \\
		|\Sigma_n^0|  &=&1&-J^2&J&-J^2&J&-1&J&-1& J&-J^2&J&-1 \\
		|\Sigma_n^1|  &=&1&J&1&J&J^2&J&1&J&1&J&J^2&J\\
\end{array}%
\end{equation*}

\begin{theorem}
\label{thmAll}
For each $p,n\geq 0$,
the Hankel determinants $|H_n^p|$
and $|\Sigma_n^p|$ are equal to $0,\pm 1, \pm J, \pm J^2$.
\end{theorem}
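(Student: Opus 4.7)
The plan is to prove Theorem \ref{thmAll} by strong induction on $n$, extending the reduction scheme of Theorems \ref{thmH} and \ref{thmSigma} to cover every shift $p\geq 0$. First observe that Theorems \ref{thmH} and \ref{thmSigma} already settle the cases $p\in\{0,1\}$: the recurrences (\ref{rec:H0})--(\ref{Sreur1}) only ever multiply the previous value by an element of $\{\pm 1,\pm J,\pm J^{2}\}$, and since this six-element set is closed under multiplication (using $J^{3}=1$), an easy induction on $n$ keeps $|H_{n}^{0}|,|H_{n}^{1}|,|\Sigma_{n}^{0}|,|\Sigma_{n}^{1}|$ inside $\{0,\pm 1,\pm J,\pm J^{2}\}$.

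For $p\geq 2$ I would write $p=3q+r$ with $r\in\{0,1,2\}$ and split additionally on $n\bmod 3$. The base-$3$ structure of $\boldsymbol{c}$ recorded in (\ref{rec:c}) imposes a three-fold block pattern on any Hankel matrix $H_{n}^{p}$: entries indexed by a position $\equiv 2\pmod 3$ vanish, while entries at positions $\equiv 1\pmod 3$ are exactly $J$ times those at positions $\equiv 0\pmod 3$. Performing the same kind of row/column eliminations used to establish the $p\in\{0,1\}$ recurrences should produce, for each of the nine residue pairs $(n\bmod 3,\ p\bmod 3)$, reductions of the shape
\[
|H_{3n+a}^{3q+r}| \;=\; \varepsilon\,|H_{n'}^{q'}|,
\qquad
|\Sigma_{3n+a}^{3q+r}| \;=\; \varepsilon'\,|\Sigma_{n'}^{q'}|,
\]
with $n'\in\{n,n+1\}$, $q'$ a function of $q$ and $r$, and $\varepsilon,\varepsilon'\in\{\pm 1,\pm J,\pm J^{2}\}$. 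Once such general-$p$ recurrences are in hand, strong induction on $n$ closes the argument: each $|H_{n}^{p}|$ or $|\Sigma_{n}^{p}|$ reduces in finitely many steps to the base case $|H_{0}^{p}|=|\Sigma_{0}^{p}|=1$ up to a product of scalars in $\{\pm 1,\pm J,\pm J^{2}\}$.

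The main obstacle is the derivation of this full family of generalized reductions. In the spirit of the sixteen coupled identities of \cite{APWW}, one must identify a closed collection of auxiliary Hankel-type determinants (possibly including bordered variants and certain combinations mixing $H$ and $\Sigma$) that maps into itself under the base-$3$ block elimination, and verify that in every branch the scalar that is factored out remains in $\{\pm 1,\pm J,\pm J^{2}\}$. This closure, which is the numerical heart of Theorem \ref{thmAll}, hinges on the fact that the morphism $\sigma$ introduces only the coefficients $0,1,J,J^{2}$ and on the identity $J^{3}=1$; the technical difficulty is to carry out the elimination in a way that never produces scalar factors outside this six-element set.
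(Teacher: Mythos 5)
Your overall strategy---reduce both indices modulo $3$, factor out scalars, and induct---is the same as the paper's, but the proposal stops exactly at the point that carries all the content, and the form you conjecture for the missing reductions is not the one that actually holds. The ``full family of generalized reductions'' you defer to future work is precisely Lemma \ref{keylemma}, which is stated in Section 2 and proved in Sections \ref{sec:proofkey} and \ref{sec:proofkey2}; you should simply invoke it. Moreover, those eighteen identities do \emph{not} have the single-factor shape $|H_{3n+a}^{3q+r}|=\varepsilon\,|H_{n'}^{q'}|$ with $\varepsilon$ a unit: each right-hand side is a scalar times a \emph{product of two or three} lower-order determinants mixing $H$'s and $\Sigma$'s, e.g.\ $|H_{3n}^{3p}|=(-1)^{n}|H_{n}^{p}|\cdot|H_{n}^{p+1}|\cdot|\Sigma_{n}^{p}|$, and two of them ($(L8)$, $(L18)$) are identically zero, which is incompatible with a unit $\varepsilon$. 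The collapse of such products to a single determinant does happen for $p\in\{0,1\}$, but only via the identity (\ref{eq:HandS}), which is established for those two columns alone; no general-$p$ analogue is claimed or needed. The closed system under base-$3$ elimination is therefore not a list of single determinants but the full two-parameter family $\{|H_{n}^{p}|,|\Sigma_{n}^{p}|\}$ together with \emph{multiplicativity} of the target set.

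Once Lemma \ref{keylemma} is in hand, Theorem \ref{thmAll} is immediate, and your induction does close: induct on the order $N$. Writing $N=3n+a$ and the shift as $3p+r$, the relevant identity among $(L1)$--$(L18)$ expresses $|H_{N}^{\,\cdot}|$ (resp.\ $|\Sigma_{N}^{\,\cdot}|$) as a product of determinants of order at most $n+1<N$ and a scalar in $\{0,\pm1,\pm J,\pm J^{2}\}$; the base cases $N\le 1$ give $1$, $c_{p}$ or $s_{p}$, all in this set; and the set $\{0,\pm1,\pm J,\pm J^{2}\}$ is closed under multiplication because $J^{3}=1$. That single observation is the paper's entire proof of the theorem. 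Your separate treatment of $p\in\{0,1\}$ via Theorems \ref{thmH} and \ref{thmSigma} is correct but subsumed by this argument.
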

\medskip

Let us make further comments about Hankel determinants and automatic sequences.
Hankel matrices and Hankel determinants
of a sequence
are
strong connected to the moment problem \cite{Sho}
and
to the Pad\'{e} approximation \cite{Bak, Bre}.
In \cite{Kam}, Kamae, Tamura and Wen studied the properties of Hankel
determinants for the Fibonacci word and give a quantitative relation between
the Hankel determinant and the Pad\'{e} pair. Later, Tamura \cite{Tam}
generalized the results for a class of special sequences. Allouche, Peyri%
\`{e}re, Wen and Wen studied the properties of Hankel determinants $|%
\mathcal{E}_n^p|$ of the Thue-Morse sequence in \cite{APWW}. They proved
that the Hankel determinants $|\mathcal{E}_n^p|$ modulo 2 recognized as a
two-dimensional sequence (or \emph{double sequence}) is 2-automatic.

Theorem \ref{thmH} implies that the first two columns of the two-dimensional
sequence $\{|H_{n}^{p}|\}_{n,p\geq 0}$, i.e., $\{|H_{n}^{0}|\}_{n\geq 0}$ and
$\{H_{n}^{1}\}_{n\geq 0}$, are 3-automatic sequences. They are obviously
\emph{aperiodic}, which are different than the
Hankel determinants of the Thue--Morse sequence and of the regular
paperfolding sequence studied in  \cite{APWW} and \cite {GWW, Han2} respectively.
\medskip

In Section 2 we establish a key lemma, namely, Lemma \ref{keylemma},
which consists a list of recurrent relations between
the determinants  $|H_n^p|$ and $|\Sigma_n^p|$.
Theorems \ref{thmH}-\ref{thmAll}
are simple consequences of Lemma \ref{keylemma}.
The recurrent relations them-self are proved in
Sections \ref{sec:proofkey} and \ref{sec:proofkey2}.

\section{The sudoku method} 
The proofs of Theorems \ref{thmH}-\ref{thmAll} are based
on the method developed
by Allouche, Peyri\`ere, Wen and Wen \cite{APWW}, that could be
called  {\it sudoku method}. The sudoku method consists some basic determinant
manipulations. Matrices are often split into $3\times 3=9$ small blocks.

For each matrix $M=(m_{i,j})_{i,j=1,2,\ldots, n}$ of size $n\times n$,
denote by $M^{t}$ the transpose of
$M$. Let $M^{(i)}$ be the $n\times (n-1)$-matrix obtained by deleting the $i$%
-th \emph{column} of $M$, and $M_{(i)}$ be the $(n-1)\times n$ -matrix
obtained by deleting the $i$-th \emph{row} of $M$.
The determinant of the matrix~$M$ is denoted by $|M|$.
Also, let $\mathbf{0}_{m,n}$ denote the $m\times n$ zero matrix.
\medskip

For each $n\geq 1$ let $P(n)$ be the $n\times n$-matrix definied by
\begin{equation}\label{def:Pn}
P(n)=(e_{1},e_{4},\cdots ,e_{3n_{1}-2},e_{2},e_{5},\cdots
,e_{3n_{2}-1},e_{3},e_{6},\cdots ,e_{3n_{3}}),
\end{equation}%
where $n_{1}=\lfloor\frac{n+2}{3}\rfloor,
n_{2}=\lfloor\frac{n+1}{3}\rfloor,
n_{3}=\lfloor\frac{n}{3}\rfloor$ and
$e_{j}$ is the $j$-th unit column vector of order $n$, i.e., the column
vector with $1$ as the $j$-th entry and zeros elsewhere.
For simplicity, we write $P$ instead of $P(n)$, when no confusion can occur.
Obviously, $|P(n)|=\pm 1$.
When consider $P(3n),P(3n+1),P(3n+2)$, the following diagram shows
the values of
$n_{1},n_{2}$ and $n_{3}$ in these cases:
\begin{equation*}
\begin{array}{c|ccc}
& n_{1} & n_{2} & n_{3} \\ \hline
3n & n & n & n \\
3n+1 & n+1 & n & n \\
3n+2 & n+1 & n+1 & n%
\end{array}%
\end{equation*}

\begin{lemma}\label{thm:PtMP}
Let $M=(m_{i,j})_{1\leq i,j\leq n}$ be an $n\times n$-matrix
and $P=P(n)$ be the matrix, of the same size as $M$, defined in (\ref{def:Pn}). Then
\begin{equation*}
P^{t}MP=\left(
\begin{matrix}
(m_{3i-2,3j-2})_{n_{1}\times n_{1}} & (m_{3i-2,3j-1})_{n_{1}\times n_{2}} &
(m_{3i-2,3j})_{n_{1}\times n_{3}} \\
(m_{3i-1,3j-2})_{n_{2}\times n_{1}} & (m_{3i-1,3j-1})_{n_{2}\times n_{2}} &
(m_{3i-1,3j})_{n_{2}\times n_{3}} \\
(m_{3i,3j-2})_{n_{3}\times n_{1}} & (m_{3i,3j-1})_{n_{3}\times n_{2}} &
(m_{3i,3j})_{n_{3}\times n_{3}}%
\end{matrix}%
\right) ,
\end{equation*}%
where $(m_{3i-2,3j-1})_{s\times t}$ means the matrix $(m_{3i-2,3j-1})_{1\leq
i\leq s,1\leq j\leq t}$.
\end{lemma}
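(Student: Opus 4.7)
The plan is to view $P = P(n)$ as the permutation matrix associated with an explicit permutation $\pi$ of $\{1, 2, \ldots, n\}$, and then invoke the standard identity that conjugation by a permutation matrix reorders both rows and columns of $M$ according to $\pi$. The conclusion will follow by a direct matching of indices with the three-block decomposition on the right-hand side.

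First I would read the permutation off from the definition \eqref{def:Pn}. Since the $j$-th column of $P$ is a unit vector $e_{\pi(j)}$, the list of subscripts gives $\pi(i) = 3i-2$ for $1 \leq i \leq n_1$, then $\pi(n_1 + k) = 3k-1$ for $1 \leq k \leq n_2$, and finally $\pi(n_1 + n_2 + k) = 3k$ for $1 \leq k \leq n_3$. A quick check using the table of values of $n_1, n_2, n_3$ confirms that $n_1 + n_2 + n_3 = n$, so $\pi$ is indeed a permutation of $\{1, \ldots, n\}$.

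Next I would apply the standard identity for permutation matrices: if $P$ has $j$-th column $e_{\pi(j)}$, then the $j$-th column of $MP$ is the $\pi(j)$-th column of $M$, and the $i$-th row of $P^t M$ is the $\pi(i)$-th row of $M$; hence $(P^t M P)_{i,j} = m_{\pi(i), \pi(j)}$ for all $i, j$. Substituting the three-case description of $\pi$ and partitioning the index set $\{1, \ldots, n\}$ as $\{1, \ldots, n_1\} \sqcup \{n_1+1, \ldots, n_1+n_2\} \sqcup \{n_1+n_2+1, \ldots, n\}$ for both rows and columns produces exactly the nine blocks stated in the lemma. For example, when $i \leq n_1$ and $n_1 < j \leq n_1 + n_2$, writing $j' = j - n_1$ yields $(P^t M P)_{i,j} = m_{3i-2,\, 3j'-1}$, which is the $(i, j')$-entry of the block $(m_{3i-2, 3j-1})_{n_1 \times n_2}$; the other eight cases follow by identical bookkeeping.

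The only obstacle is indexing discipline, namely keeping the offsets $n_1$ and $n_1 + n_2$ consistent across the three row ranges and the three column ranges; no nontrivial linear algebra is required beyond the row/column-reordering property of permutation matrices. Once the permutation $\pi$ is written down explicitly, the lemma is a direct substitution.
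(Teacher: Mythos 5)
Your proof is correct: you identify $P(n)$ as the permutation matrix of the explicit permutation $\pi$ sorting $\{1,\dots,n\}$ by residue class mod $3$, apply the standard identity $(P^{t}MP)_{i,j}=m_{\pi(i),\pi(j)}$, and match the nine blocks by the index partition $\{1,\dots,n_1\}\sqcup\{n_1+1,\dots,n_1+n_2\}\sqcup\{n_1+n_2+1,\dots,n\}$, with the counts $n_1,n_2,n_3$ correctly accounting for the three residue classes. The paper states this lemma without proof, treating it as an immediate consequence of the definition of $P(n)$; your argument is precisely the standard one being left implicit.
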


Recall that for each sequence of complex numbers
$\mathbf{u}=(u _{k})_{k=0,1,\ldots}$
the corresponding $(p,n)$-order
\emph{Hankel matrix}
$H_{n}^{p}(\mathbf u)$ is defined by (\ref{def:H}).
Let $K_{n}^{p}=K_{n}^{p}(\mathbf u):=(u_{p+3(i+j-2)})_{1\leq i,j\leq n}$.
When applying Lemma \ref{thm:PtMP} for
$M=H_{3n}^{p}(\mathbf u)$, $H_{3n+1}^{p}(\mathbf u)$, $H_{3n+2}^{p}(\mathbf u)$, we get
\begin{eqnarray}
& & P^{t}H_{3n}^{p}(\mathbf u)P  \notag \\
&=&
\begin{pmatrix}
(u_{p+3(i+j-2)})_{n\times n} & (u_{p+3(i+j-2)+1})_{n\times n} &
(u_{p+3(i+j-2)+2})_{n\times n} \\
(u_{p+3(i+j-2)+1})_{n\times n} & (u_{p+3(i+j-2)+2})_{n\times n} &
(u_{p+3(i+j-2)+3})_{n\times n} \\
(u_{p+3(i+j-2)+2})_{n\times n} & (u_{p+3(i+j-2)+3})_{n\times n} &
(u_{p+3(i+j-2)+4})_{n\times n}%
\end{pmatrix}
\notag \\
&=&
\begin{pmatrix}
K_{n}^{p} & K_{n}^{p+1} & K_{n}^{p+2} \\
K_{n}^{p+1} & K_{n}^{p+2} & K_{n}^{p+3} \\
K_{n}^{p+2} & K_{n}^{p+3} & K_{n}^{p+4}%
\end{pmatrix}%
,  \label{h3n}
\end{eqnarray}
and
\begin{eqnarray}
P^{t}H_{3n+1}^{p}(\mathbf u)P &\!\!\!=\!\!\!\!&
\begin{pmatrix}
K_{n+1}^{p} & (K_{n+1}^{p+1})^{(n+1)} & (K_{n+1}^{p+2})^{(n+1)} \\
(K_{n+1}^{p+1})_{(n+1)} & K_{n}^{p+2} & K_{n}^{p+3} \\
(K_{n+1}^{p+2})_{(n+1)} & K_{n}^{p+3} & K_{n}^{p+4}%
\end{pmatrix}%
,  \label{h3n+1} \\
P^{t}H_{3n+2}^{p}(\mathbf u)P &\!\!\!=\!\!\!\!&
\begin{pmatrix}
K_{n+1}^{p} & K_{n+1}^{p+1} & (K_{n+1}^{p+2})^{(n+1)} \\
K_{n+1}^{p+1} & K_{n+1}^{p+2} & (K_{n+1}^{p+3})^{(n+1)} \\
(K_{n+1}^{p+2})_{(n+1)} & (K_{n+1}^{p+3})_{(n+1)} & K_{n}^{p+4}%
\end{pmatrix}%
.  \label{h3n+2}
\end{eqnarray}

\medskip

As shown in Sections \ref{sec:proofkey}-\ref{sec:proofkey2},
Formulae (\ref{h3n})-(\ref{h3n+2})
can be used to prove the following
recurrent relations between
the determinants  $|H_n^p|$ and $|\Sigma_n^p|$
when the sequence $\mathbf u$ is taken from the sequences
$\mathbf c$ and  $\mathbf s$ defined in (\ref{rec:c}) and (\ref{rec:s})
respectively.
Through these eighteen recurrent formulae, we can evaluate all the Hankel
determinants $|H_{n}^{p}|$ and $|\Sigma _{n}^{p}|$ $(n\geq 1,p\geq 0)$.
Our key lemma is stated next.

\begin{lemma}
\label{keylemma} For each $p\geq 0$ and $n\geq 1$ we have

\begin{enumerate}[$(L1)$\,]
\item $|H_{3n}^{3p}|=(-1)^{n}|H_{n}^{p}|\cdot |H_{n}^{p+1}|\cdot |\Sigma
_{n}^{p}|,$

\item $|H_{3n+1}^{3p}|=(-1)^{n}|H_{n}^{p+1}|\cdot |H_{n+1}^{p}|\cdot |\Sigma
_{n}^{p}|,$

\item $|H_{3n+2}^{3p}|=(-1)^{n+1}J^{2}\,|H_{n+1}^{p}|^{2}\cdot |\Sigma
_{n}^{p+1}|,$

\item $|H_{3n}^{3p+1}|=(-1)^{n}|H_{n}^{p+1}|^{2}\cdot |\Sigma _{n}^{p}|,$

\item $|H_{3n+1}^{3p+1}|=(-1)^{n}J\, |H_{n}^{p+1}|\cdot \left\vert
H_{n+1}^{p}\right\vert \cdot |\Sigma _{n}^{p+1}|,$

\item $|H_{3n+2}^{3p+1}|=(-1)^{n}J\, |H_{n+1}^{p+1}|\cdot \left\vert
H_{n+1}^{p}\right\vert \cdot |\Sigma _{n}^{p+1}|,$

\item $|H_{3n}^{3p+2}|=(-1)^{n}|H_{n}^{p+1}|^{2}\cdot |\Sigma _{n}^{p+1}|,$

\item $|H_{3n+1}^{3p+2}|=0,$

\item $|H_{3n+2}^{3p+2}|=(-1)^{n+1}|H_{n+1}^{p+1}|^{2}\cdot |\Sigma
_{n}^{p+1}|,$

\item $|\Sigma _{3n}^{3p}|=(-1)^{n}|\Sigma _{n}^{p}|^{2}\cdot |H_{n}^{p+1}|,$

\item $|\Sigma _{3n+1}^{3p}|=(-1)^{n+1}J^{2}\,|H_{n+1}^{p}|\cdot |\Sigma
_{n}^{p}|\cdot \left\vert \Sigma _{n}^{p+1}\right\vert ,$

\item $|\Sigma _{3n+2}^{3p}|=(-1)^{n+1}J^{2}\,|H_{n+1}^{p}|\cdot |\Sigma
_{n+1}^{p}|\cdot |\Sigma _{n}^{p+1}|,$

\item $|\Sigma _{3n}^{3p+1}|=(-1)^{n}|H_{n}^{p+1}|\cdot |\Sigma
_{n}^{p}|\cdot |\Sigma _{n}^{p+1}|,$

\item $|\Sigma _{3n+1}^{3p+1}|=(-1)^{n}J|H_{n+1}^{p}|\cdot |\Sigma
_{n}^{p+1}|^{2},$

\item $|\Sigma _{3n+2}^{3p+1}|=(-1)^{n+1}|H_{n+1}^{p+1}|\cdot |\Sigma
_{n+1}^{p}|\cdot |\Sigma _{n}^{p+1}|,$

\item $|\Sigma _{3n}^{3p+2}|=(-1)^{n}|\Sigma _{n}^{p+1}|^{2}\cdot
|H_{n}^{p+1}|,$

\item $|\Sigma _{3n+1}^{3p+2}|=(-1)^{n}|\Sigma _{n}^{p+1}|^{2}\cdot
|H_{n+1}^{p+1}|,$

\item $|\Sigma _{3n+2}^{3p+2}|=0.$
\end{enumerate}
\end{lemma}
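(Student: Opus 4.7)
The plan is to establish all eighteen identities by a uniform ``sudoku'' procedure: for each case I would conjugate the relevant Hankel matrix by the permutation $P$, so that Lemma~\ref{thm:PtMP} produces a $3\times 3$ block decomposition via (\ref{h3n})--(\ref{h3n+2}); substitute closed forms for the small blocks $K_n^p(\mathbf u)=(u_{p+3(i+j-2)})$; and then reduce by elementary row- and column-block operations until the determinant can be read off. The first step is to evaluate $K_n^p(\mathbf u)$ according to $p\bmod 3$ for both $\mathbf u\in\{\mathbf c,\mathbf s\}$. From (\ref{rec:c}) one obtains
\begin{equation*}
K_n^{3q}(\mathbf c)=H_n^q,\qquad K_n^{3q+1}(\mathbf c)=J\,H_n^q,\qquad K_n^{3q+2}(\mathbf c)=\mathbf{0}_{n,n},
\end{equation*}
while (\ref{rec:s}) together with $\Sigma_n^q=H_n^q+H_n^{q+1}$ yields
\begin{equation*}
K_n^{3q}(\mathbf s)=-J^{2}H_n^q,\qquad K_n^{3q+1}(\mathbf s)=J\,H_n^q,\qquad K_n^{3q+2}(\mathbf s)=H_n^{q+1}.
\end{equation*}
The vanishing $K_n^{3q+2}(\mathbf c)=\mathbf 0$ coming from $c_{3m+2}=0$ is the key structural fact: it makes many of the nine sub-blocks in each decomposition either zero or scalar multiples of a common $H_n^{\ast}$.

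For each of the sixteen nonzero identities, the recipe is identical: substitute the block evaluations into (\ref{h3n}), (\ref{h3n+1}) or (\ref{h3n+2}) according to the residue of the subscript; then use block row/column subtractions (of $J$ or $J^{2}$ multiples) to bring the matrix to block-triangular form; and finally evaluate a residual $2n\times 2n$ Schur complement. The identities $J^{3}=1$ and $1+J+J^{2}=0$ are used crucially so that the surviving block regroups into $H_n^{p+\ast}+H_n^{p+\ast+1}=\Sigma_n^{p+\ast}$. As a prototype for (L1), subtracting $J$ times the first row-block of $P^{t}H_{3n}^{3p}P$ from the second puts the matrix in upper block-triangular form whose first diagonal block is $H_n^p$; the remaining $2n\times 2n$ block has Schur complement $-J^{2}H_n^p-H_n^{p+1}(JH_n^{p+1})^{-1}H_n^{p+1}=-J^{2}\Sigma_n^p$, and the leftover scalar powers of $J$ combine to $J^{3n}=1$, giving $(-1)^{n}|H_n^p|\,|H_n^{p+1}|\,|\Sigma_n^p|$. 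Every other nonzero case follows the same pattern, with an extra factor $(-1)^{n^{2}}=(-1)^{n}$ appearing whenever one needs to swap two equal-sized row- or column-blocks to reach a triangular form.

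The two vanishing identities (L8) and (L18) are handled by a rank argument: after the analogous block operations, the first row-block consists of $n+1$ rows supported on at most $n$ linearly independent columns, so these rows are linearly dependent and the full determinant is zero. The main obstacle I expect is the bookkeeping for the cases produced by (\ref{h3n+1}) and (\ref{h3n+2}), where the trimmed blocks $(K_{n+1}^{p})^{(n+1)}$ and $(K_{n+1}^{p})_{(n+1)}$ are non-square and so the ``block'' row/column operations must be applied to portions of blocks rather than whole blocks; keeping accurate account of the powers of $J$ and of the signs contributed by block-permutations across all eighteen cases is the most error-prone step, even though each case conforms to the same template.
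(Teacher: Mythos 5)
Your proposal follows the paper's proof essentially verbatim: the same block evaluations of $K_n^{3q}(\mathbf c)$ and $K_n^{3q}(\mathbf s)$, the same conjugation by $P$ followed by reduction to block-triangular form via block row/column operations, and the same rank argument for the two vanishing cases $(L8)$ and $(L18)$. The one blemish is your prototype computation for $(L1)$: the Schur-complement step presupposes that $H_n^{p+1}$ is invertible, which genuinely fails for some $(n,p)$ since these Hankel determinants do vanish; the paper instead eliminates purely by scalar column operations --- exploiting that the blocks to be cancelled are $1$, $J$, $J^2$ times a common matrix, so $J^3=1$ annihilates them identically without any inversion --- and you should phrase the elimination that way (or add a density/polynomial-identity argument) to cover the singular cases.
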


\begin{corollary}\label{cor:AB}
Let $A_{n}^{p}=(-1)^{n}|H_{n}^{p+1}|\cdot |\Sigma_{n}^{p}|$
and $B_{n}^{p}=(-1)^{n}|H_{n+1}^{p}|\cdot |\Sigma _{n}^{p+1}|$. Then,
\begin{enumerate}[$(C1)$\,]
\item $A_{3n}^{3p}=(A_{n}^{p})^{3}\ ,$

\item $A_{3n+1}^{3p}=A_{n}^{p}(B_{n}^{p})^{2}\ ,$

\item $A_{3n+2}^{3p}=A_{n+1}^{p}(B_{n}^{p})^{2}\ ,$

\item $B_{3n}^{3p}=(A_{n}^{p})^{2}B_{n}^{p}\ ,$

\item $B_{3n+1}^{3p}=(B_{n}^{p})^{3}\ ,$

\item $B_{3n+2}^{3p}=(A_{n+1}^{p})^{2}B_{n}^{p}\ .$
\end{enumerate}
\end{corollary}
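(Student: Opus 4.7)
The corollary is a mechanical consequence of Lemma \ref{keylemma}, so the plan is simply to expand each left-hand side of (C1)--(C6) using the definitions
\[
A_{n}^{p}=(-1)^{n}|H_{n}^{p+1}|\cdot|\Sigma_{n}^{p}|,
\qquad
B_{n}^{p}=(-1)^{n}|H_{n+1}^{p}|\cdot|\Sigma_{n}^{p+1}|,
\]
and then substitute the appropriate pair of recurrences from Lemma \ref{keylemma}. Beyond the lemma itself, only two elementary facts are needed: the sign collapse $(-1)^{3n}=(-1)^{n}$, and the cube-root identity $J^{3}=1$, which is used to cancel the factors $J$ and $J^{2}$ appearing in intermediate expressions.

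The six identities correspond to the following pairs of recurrences: (C1) uses (L4) and (L10); (C2) uses (L5) and (L11); (C3) uses (L6) and (L12); (C4) uses (L2) and (L13); (C5) uses (L3) and (L14); (C6) uses (L1) with $n$ replaced by $n+1$ together with (L15). As a sample computation, substituting (L4) and (L10) into the definition of $A_{3n}^{3p}$ gives
\[
A_{3n}^{3p}=(-1)^{3n}\cdot(-1)^{n}|H_{n}^{p+1}|^{2}|\Sigma_{n}^{p}|\cdot(-1)^{n}|\Sigma_{n}^{p}|^{2}|H_{n}^{p+1}|=(-1)^{n}\bigl(|H_{n}^{p+1}|\cdot|\Sigma_{n}^{p}|\bigr)^{3}=(A_{n}^{p})^{3},
\]
and the remaining five cases reduce to identical sign and exponent bookkeeping.

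There is no genuine obstacle; the only places where one has to proceed with a bit of care are (C3) and (C6), where the residue class $3n+2$ forces a re-indexing from $n$ to $n+1$ in the $H$-recurrence before it can be applied, and where the $J$ and $J^{2}$ factors from the two substituted relations must combine via $J^{3}=1$ to produce a clean right-hand side. Once the correspondence table above is fixed, every case of the corollary falls out by direct inspection.
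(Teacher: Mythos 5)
Your proposal is correct and coincides with the paper's own proof: the same pairing of identities --- $(L4)$/$(L10)$ for $(C1)$, $(L5)$/$(L11)$ for $(C2)$, $(L6)$/$(L12)$ for $(C3)$, $(L2)$/$(L13)$ for $(C4)$, $(L3)$/$(L14)$ for $(C5)$, and $(L1)$ (at $n+1$) with $(L15)$ for $(C6)$ --- together with the sign collapse $(-1)^{3n}=(-1)^{n}$ and the cancellation $J^{3}=1$, and your sample computation for $(C1)$ is exactly the one displayed in the paper. One minor remark: in $(C3)$ the quantity $|H_{3n+2}^{3p+1}|$ is given by $(L6)$ at index $n$ with no re-indexing needed (only $(C6)$ requires replacing $n$ by $n+1$ in $(L1)$), but this does not affect the validity of your argument.
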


\begin{proof}
$(C1)$ From Equalities $(L4)$ and $(L10)$ stated in Lemma \ref{keylemma} we have
\begin{eqnarray*}
A_{3n}^{3p} &=&(-1)^{3n}|H_{3n}^{3p+1}|\cdot |\Sigma _{3n}^{3p}| \\
&=&(-1)^{3n}(-1)^{n}|H_{n}^{p+1}|^{2}|\Sigma _{n}^{p}|\cdot (-1)^{n}
|\Sigma _{n}^{p}|^{2} |H_{n}^{p+1}| \\
&=&(-1)^{3n}|H_{n}^{p+1}|^{3}|\Sigma _{n}^{p}|^{3} \\
&=&(A_{n}^{p})^{3}.
\end{eqnarray*}
Identity $(C2)$ (resp. $(C3)$, $(C4)$, $(C5)$, $(C6)$) are proved in the same manner by using
Equalities $(L5)$ and $(L11)$ (resp. $(L6)$ and $(L12)$,
$(L2)$ and $(L13)$, $(L3)$ and $(L14)$, $(L1)$ and $(L15)$ ) stated in Lemma \ref{keylemma}.
\end{proof}

\begin{proof}[Proof of Theorems \ref{thmH}-\ref{thmAll}]
Let $p_{n}=(-1)^{n}|H_{n}^{1}|\cdot |\Sigma _{n}^{0}|$ and $%
q_{n}=(-1)^{n}|H_{n+1}^{0}|\cdot |\Sigma _{n}^{1}|$. Then Corollory \ref{cor:AB} shows that

\begin{enumerate}
\item $p_{3n}=p_n^3$\,,

\item $p_{3n+1} = p_nq_n^2$\,,

\item $p_{3n+2} = p_{n+1} q_n^2$\,,

\item $q_{3n}=p_n^2q_n$\,,

\item $q_{3n+1}= q_n^3$\,,

\item $q_{3n+2}=p_{n+1}^2q_n$\,.

\end{enumerate}
Moreover, the first values are $p_0=p_1=p_2=q_0=q_1=q_2=1$.
By induction, we have $p_n=q_n=1$ for all
$n\geq 0$. In other words,
\begin{equation}\label{eq:HandS}
	|H_{n}^{1}|\cdot |\Sigma _{n}^{0}| =(-1)^n \text{\quad and \quad}
	|H_{n+1}^{0}|\cdot |\Sigma _{n}^{1}|=(-1)^n.
\end{equation}
The last three identities in (\ref{rec:H0})  are consequences of Equations
(L1), (L2), (L3) respectively.
Similarly,
the last three identities in (\ref{rec:H1})  are consequences of Equations
(L4), (L5), (L6) respectively.  Thus, Theorem \ref{thmH} is proved.
By Relation (\ref{eq:HandS}), Theorem \ref{thmH} implies Theorem \ref{thmSigma}.
Finally, Theorem \ref{thmAll} is a consequence of Lemma \ref{keylemma}, because
the set $\{0, \pm 1, \pm J, \pm J^2\}$ is closed under multiplication.
\end{proof}

\section{Proof of equalities  $(L1)$-$(L9)$}\label{sec:proofkey} 

Recall that the Thue--Morse-like sequence $\mathbf{c}=c_{0}c_{1}\cdots c_{n}\cdots \in
\mathcal{A}^{\mathbb{\infty }}$ is characterized by the recurrent
equations in (\ref{rec:c}),
and that
$H_{n}^{p}:=H_n^p(\boldsymbol c)$ is the $(p,n)$-order Hankel matrix of
$\boldsymbol{c}$.
Let $K_{n}^{p}:=K_{n}^{p}(\mathbf c):=(c_{p+3(i+j-2)})_{1\leq i,j\leq n}$.
By (\ref{rec:c}), we have for all $n\geq 1,p\geq 0,$
\begin{equation}
K_{n}^{3p}=H_{n}^{p},\quad
K_{n}^{3p+1}=JH_{n}^{p},\quad
K_{n}^{3p+2}=\boldsymbol{0}_{n,n}.  \label{eq:Kc}
\end{equation}%
Equalities $(L1)$-$(L8)$ are proved by combining (\ref{eq:Kc}) and (\ref{h3n}-\ref{h3n+2}) where the sequence $\mathbf u$ is specialized to  $\mathbf c$.
For simplicity, during the proof, we
will denote $n+1$ and $p+1$ by $\overline{n}$ and $\overline{p}$
respectively.
Also, let $I_{n,n}$ be the identity matrix of size $n\times n$.

\smallskip
\noindent
$(L1)$ Combine (\ref{eq:Kc}) and (\ref{h3n}), we have
\begin{flalign*}
	&	\left\vert H_{3n}^{3p}\right\vert =|P^{t}H_{3n}^{3p}P| \\
=&\left\vert
\begin{matrix}
H_{n}^{p} & JH_{n}^{p} & \boldsymbol{0}_{n,n} \\
JH_{n}^{p} & \boldsymbol{0}_{n,n} & H_{n}^{\overline{p}} \\
\boldsymbol{0}_{n,n} & H_{n}^{\overline{p}} & JH_{n}^{\overline{p}}%
\end{matrix}%
\right\vert \\
=&\left\vert \left(
\begin{matrix}
H_{n}^{p} & JH_{n}^{p} & \boldsymbol{0}_{n,n} \\
JH_{n}^{p} & \boldsymbol{0}_{n,n} & H_{n}^{\overline{p}} \\
\boldsymbol{0}_{n,n} & H_{n}^{\overline{p}} & JH_{n}^{\overline{p}}%
\end{matrix}%
\right) \left(
\begin{matrix}
I_{n,n} & -JI_{n,n} & \boldsymbol{0}_{n,n} \\
\boldsymbol{0}_{n,n} & I_{n,n} & \boldsymbol{0}_{n,n} \\
\boldsymbol{0}_{n,n} & -J^{2}I_{n,n} & I_{n,n}%
\end{matrix}%
\right) \right\vert \\
=&\left\vert
\begin{matrix}
H_{n}^{p} & \boldsymbol{0}_{n,n} & \boldsymbol{0}_{n,n} \\
JH_{n}^{p} & -J^{2}H_{n}^{p}-J^{2}H_{n}^{\overline{p}} & H_{n}^{\overline{p}}
\\
\boldsymbol{0}_{n,n} & \boldsymbol{0}_{n,n} & JH_{n}^{\overline{p}}%
\end{matrix}%
\right\vert \\
=&(-1)^{n}|H_{n}^{p}|\cdot |H_{n}^{p+1}|\cdot |\Sigma _{n}^{p}|. &
\end{flalign*}

\smallskip
\noindent
$(L2)$ Combine (\ref{eq:Kc}) and (\ref{h3n+1}), we have
\begin{flalign*}
	&	\left\vert H_{3n+1}^{3p}\right\vert  =|P^{t}H_{3n+1}^{3p}P| \\
=&\left\vert
\begin{matrix}
H_{\overline{n}}^{p} & \mathbf{(}JH_{\overline{n}}^{p}\mathbf{)}^{(\overline{%
n})} & \mathbf{0}_{\overline{n},n} \\
(JH_{\overline{n}}^{p})_{(\overline{n})} & \mathbf{0}_{n,n} & H_{n}^{%
\overline{p}} \\
\mathbf{0}_{n,\overline{n}} & H_{n}^{\overline{p}} & JH_{n}^{\overline{p}}%
\end{matrix}%
\right\vert  \\
=&\left\vert \left(
\begin{matrix}
H_{\overline{n}}^{p} & \mathbf{(}JH_{\overline{n}}^{p}\mathbf{)}^{(\overline{%
n})} & \mathbf{0}_{\overline{n},n} \\
(JH_{\overline{n}}^{p})_{(\overline{n})} & \mathbf{0}_{n,n} & H_{n}^{%
\overline{p}} \\
\mathbf{0}_{n,\overline{n}} & H_{n}^{\overline{p}} & JH_{n}^{\overline{p}}%
\end{matrix}%
\right) \left(
\begin{matrix}
I_{\overline{n},\overline{n}} & -JI_{\overline{n},n} & \mathbf{0}_{\overline{%
n},n} \\
\mathbf{0}_{n,\overline{n}} & I_{n,n} & 0 \\
\mathbf{0}_{n,\overline{n}} & -J^{2}I_{n,n} & I_{n,n}%
\end{matrix}%
\right) \right\vert  \\
=&\left\vert
\begin{matrix}
H_{\overline{n}}^{p} & \mathbf{0}_{\overline{n},n} & \mathbf{0}_{\overline{n}%
,n} \\
(JH_{\overline{n}}^{p})_{(\overline{n})} & -J^{2}(H_{n}^{p}+H_{n}^{\overline{%
p}}) & H_{n}^{\overline{p}} \\
\mathbf{0}_{n,\overline{n}} & \boldsymbol{0}_{n,n} & JH_{n}^{\overline{p}}%
\end{matrix}%
\right\vert  \\
=&(-1)^{n}|H_{n}^{p+1}|\cdot |H_{n+1}^{p}|\cdot |\Sigma _{n}^{p}|. &
\end{flalign*}

\smallskip
\noindent
$(L3)$ Combine (\ref{eq:Kc}) and (\ref{h3n+2}), we have
\begin{flalign*}
	&	\left\vert H_{3n+2}^{3p}\right\vert =|P^{t}H_{3n+2}^{3p}P| & \\
=&\left\vert
\begin{matrix}
H_{\overline{n}}^{p} & JH_{\overline{n}}^{p} & \boldsymbol{0}_{\overline{n}%
,n} \\
JH_{\overline{n}}^{p} & \boldsymbol{0}_{\overline{n},\overline{n}} & (H_{%
\overline{n}}^{\overline{p}})^{(\overline{n})} \\
\boldsymbol{0}_{n,\overline{n}} & (H_{\overline{n}}^{\overline{p}})_{(%
\overline{n})} & JH_{n}^{\overline{p}}%
\end{matrix}%
\right\vert \\
=&\left\vert \left(
\begin{matrix}
H_{\overline{n}}^{p} & JH_{\overline{n}}^{p} & \boldsymbol{0}_{\overline{n}%
,n} \\
JH_{\overline{n}}^{p} & \boldsymbol{0}_{\overline{n},\overline{n}} & (H_{%
\overline{n}}^{\overline{p}})^{(\overline{n})} \\
\boldsymbol{0}_{n,\overline{n}} & (H_{\overline{n}}^{\overline{p}})_{(%
\overline{n})} & JH_{n}^{\overline{p}}%
\end{matrix}%
\right) \left(
\begin{matrix}
I_{\overline{n},\overline{n}} & -JI_{\overline{n},\overline{n}} & -J^{2}(I_{%
\overline{n},\overline{n}})^{(1)} \\
\boldsymbol{0}_{\overline{n},\overline{n}} & I_{\overline{n},\overline{n}} &
J(I_{\overline{n},\overline{n}})^{(1)} \\
\boldsymbol{0}_{n,\overline{n}} & \boldsymbol{0}_{n,\overline{n}} & I_{n,n}%
\end{matrix}%
\right) \right\vert \\
=&\left\vert
\begin{matrix}
H_{\overline{n}}^{p} & \boldsymbol{0}_{\overline{n},\overline{n}} &
\boldsymbol{0}_{\overline{n},n} \\
JH_{\overline{n}}^{p} & -J^{2}H_{\overline{n}}^{p} & \boldsymbol{0}_{%
\overline{n},n} \\
\boldsymbol{0}_{n,\overline{n}} & (H_{\overline{n}}^{\overline{p}})_{(%
\overline{n})} & J(H_{n}^{\overline{p}}+H_{n}^{\overline{p}+1})%
\end{matrix}%
\right\vert \\
=&(-1)^{n+1}J^{2}\left\vert H_{n+1}^{p}\right\vert ^{2}\cdot \left\vert
\Sigma _{n}^{p+1}\right\vert .
\end{flalign*}

\smallskip
\noindent
$(L4)$ Combine (\ref{eq:Kc}) and (\ref{h3n}), we have
\begin{flalign*}
	&	\left\vert H_{3n}^{3p+1}\right\vert =|P^{t}H_{3n}^{3p+1}P| & \\
=&\left\vert
\begin{matrix}
JH_{n}^{p} & \mathbf{0}_{n,n} & H_{n}^{\overline{p}} \\
\mathbf{0}_{n,n} & H_{n}^{\overline{p}} & JH_{n}^{\overline{p}} \\
H_{n}^{\overline{p}} & JH_{n}^{\overline{p}} & \mathbf{0}_{n,n}%
\end{matrix}%
\right\vert \\
=&\left\vert \left(
\begin{matrix}
JH_{n}^{p} & \mathbf{0}_{n,n} & H_{n}^{\overline{p}} \\
\mathbf{0}_{n,n} & H_{n}^{\overline{p}} & JH_{n}^{\overline{p}} \\
H_{n}^{\overline{p}} & JH_{n}^{\overline{p}} & \mathbf{0}_{n,n}%
\end{matrix}%
\right) \left(
\begin{matrix}
I_{n,n} & -JI_{n,n} & J^{2}I_{n,n} \\
\mathbf{0}_{n,n} & I_{n,n} & -JI_{n,n} \\
\boldsymbol{0}_{n,n} & \boldsymbol{0}_{n,n} & I_{n,n}%
\end{matrix}%
\right) \right\vert \\
=&\left\vert
\begin{matrix}
JH_{n}^{p} & -J^{2}H_{n}^{p} & H_{n}^{p}+H_{n}^{\overline{p}} \\
\mathbf{0}_{n\times n} & H_{n}^{\overline{p}} & \boldsymbol{0}_{n,n} \\
H_{n}^{\overline{p}} & \boldsymbol{0}_{n,n} & \mathbf{0}_{n\times n}%
\end{matrix}%
\right\vert \\
=&(-1)^{n}\left\vert H_{n}^{p+1}\right\vert ^{2}\cdot \left\vert \Sigma
_{n}^{p}\right\vert .
\end{flalign*}

\smallskip
\noindent
$(L5)$ Combine (\ref{eq:Kc}) and (\ref{h3n+1}), we have
\begin{flalign*}
	&	\left\vert H_{3n+1}^{3p+1}\right\vert =|P^{t}H_{3n+1}^{3p+1}P| &\\
=&\left\vert
\begin{array}{ccc}
JH_{\overline{n}}^{p} & \boldsymbol{0}_{\overline{n},n} & (H_{\overline{n}}^{%
\overline{p}})^{(\overline{n})} \\
\boldsymbol{0}_{n,\overline{n}} & H_{n}^{\overline{p}} & JH_{n}^{\overline{p}%
} \\
(H_{\overline{n}}^{\overline{p}})_{(\overline{n})} & JH_{n}^{\overline{p}} &
\boldsymbol{0}_{n,n}%
\end{array}%
\right\vert \\
=&\left\vert \left(
\begin{array}{ccc}
JH_{\overline{n}}^{p} & \boldsymbol{0}_{\overline{n},n} & (H_{\overline{n}}^{%
\overline{p}})^{(\overline{n})} \\
\boldsymbol{0}_{n,\overline{n}} & H_{n}^{\overline{p}} & JH_{n}^{\overline{p}%
} \\
(H_{\overline{n}}^{\overline{p}})_{(\overline{n})} & JH_{n}^{\overline{p}} &
\boldsymbol{0}_{n,n}%
\end{array}%
\right) \left(
\begin{array}{ccc}
I_{n,n} & \boldsymbol{0}_{\overline{n},n} & -J^{2}(I_{\overline{n},\overline{%
n}})^{(1)} \\
\boldsymbol{0}_{n,\overline{n}} & I_{n,n} & -JI_{n,n} \\
\boldsymbol{0}_{n,\overline{n}} & \boldsymbol{0}_{n,n} & I_{n,n}%
\end{array}%
\right) \right\vert \\
=&%
\left|
\begin{array}{ccc}
JH_{\overline{n}}^{p} & \boldsymbol{0}_{\overline{n},n} & \boldsymbol{0}_{%
\overline{n},n} \\
\boldsymbol{0}_{n,\overline{n}} & H_{n}^{\overline{p}} & \boldsymbol{0}_{n,n}
\\
(H_{\overline{n}}^{\overline{p}})_{(\overline{n})} & JH_{n}^{\overline{p}} &
-J^{2}(H_{n}^{\overline{p}}+H_{n}^{\overline{p}+1})%
\end{array}
\right|
\\
=&(-1)^{n}J\cdot |H_{n}^{p+1}|\cdot \left\vert H_{n+1}^{p}\right\vert \cdot
|\Sigma _{n}^{p+1}|.
\end{flalign*}

\smallskip
\noindent
$(L6)$ Combine (\ref{eq:Kc}) and (\ref{h3n+2}), we have
\begin{flalign*}
	&\left\vert H_{3n+2}^{3p+1}\right\vert =\left\vert P^{t}H_{3n+2}^{3p+1}P\right\vert & \\
=&\left\vert
\begin{matrix}
JH_{\overline{n}}^{p} & \boldsymbol{0}_{\overline{n},\overline{n}} & (H_{%
\overline{n}}^{\overline{p}})^{(\overline{n})} \\
\boldsymbol{0}_{\overline{n},\overline{n}} & H_{\overline{n}}^{\overline{p}}
& (JH_{\overline{n}}^{\overline{p}})^{(\overline{n})} \\
(H_{\overline{n}}^{\overline{p}})_{(\overline{n})} & (JH_{\overline{n}}^{%
\overline{p}})_{(\overline{n})} & \boldsymbol{0}_{n,n}%
\end{matrix}%
\right\vert \\
=&\left\vert \left(
\begin{matrix}
JH_{\overline{n}}^{p} & \boldsymbol{0}_{\overline{n},\overline{n}} & (H_{%
\overline{n}}^{\overline{p}})^{(\overline{n})} \\
\boldsymbol{0}_{\overline{n},\overline{n}} & H_{\overline{n}}^{\overline{p}}
& (JH_{\overline{n}}^{\overline{p}})^{(\overline{n})} \\
(H_{\overline{n}}^{\overline{p}})_{(\overline{n})} & (JH_{\overline{n}}^{%
\overline{p}})_{(\overline{n})} & \boldsymbol{0}_{n,n}%
\end{matrix}%
\right) \right. \!\!
\left. \left(
\begin{array}{ccc}
I_{\overline{n},\overline{n}} & \boldsymbol{0}_{\overline{n},\overline{n}} &
-J^{2}(I_{\overline{n},\overline{n}})^{(1)} \\
\boldsymbol{0}_{\overline{n},\overline{n}} & I_{\overline{n},\overline{n}} &
-J(I_{\overline{n},\overline{n}})^{(\overline{n})} \\
\boldsymbol{0}_{n,\overline{n}} & \boldsymbol{0}_{n,\overline{n}} & I_{n,n}%
\end{array}%
\right) \right\vert \\
=&\left\vert
\begin{matrix}
JH_{\overline{n}}^{p} & \boldsymbol{0}_{\overline{n},\overline{n}} &
\boldsymbol{0}_{\overline{n},n} \\
\boldsymbol{0}_{\overline{n},\overline{n}} & H_{\overline{n}}^{p+1} &
\boldsymbol{0}_{\overline{n},n} \\
(H_{\overline{n}}^{\overline{p}})_{(\overline{n})} & (JH_{\overline{n}}^{%
\overline{p}})_{(\overline{n})} & -J^{2}(H_{n}^{\overline{p}+1}+H_{n}^{%
\overline{p}})%
\end{matrix}%
\right\vert \\
=&(-1)^{n}J\cdot |H_{n+1}^{p+1}|\cdot \left\vert H_{n+1}^{p}\right\vert
\cdot |\Sigma _{n}^{p+1}|.
\end{flalign*}

\smallskip
\noindent
$(L7)$ Combine (\ref{eq:Kc}) and (\ref{h3n}), we have
\begin{flalign*}
	&\left\vert H_{3n}^{3p+2}\right\vert =|P^{t}H_{3n}^{3p+2}P| &\\
=&\left\vert
\begin{matrix}
\boldsymbol{0}_{n,n} & H_{n}^{\overline{p}} & JH_{n}^{\overline{p}} \\
H_{n}^{\overline{p}} & JH_{n}^{\overline{p}} & \boldsymbol{0}_{n,n} \\
JH_{n}^{\overline{p}} & \boldsymbol{0}_{n,n} & H_{n}^{\overline{p}+1}%
\end{matrix}%
\right\vert \\
=&\left\vert \left(
\begin{matrix}
\boldsymbol{0}_{n,n} & H_{n}^{\overline{p}} & JH_{n}^{\overline{p}} \\
H_{n}^{\overline{p}} & JH_{n}^{\overline{p}} & \boldsymbol{0}_{n,n} \\
JH_{n}^{\overline{p}} & \boldsymbol{0}_{n,n} & H_{n}^{\overline{p}+1}%
\end{matrix}%
\right) \left(
\begin{array}{ccc}
I_{n,n} & \boldsymbol{0}_{n,n} & \boldsymbol{0}_{n,n} \\
-J^{2}I_{n,n} & I_{n,n} & \boldsymbol{0}_{n,n} \\
JI_{n,n} & -J^{2}I_{n,n} & I_{n,n}%
\end{array}%
\right) \right\vert \\
=&\left\vert
\begin{matrix}
\boldsymbol{0}_{n,n} & \boldsymbol{0}_{n,n} & JH_{n}^{\overline{p}} \\
\boldsymbol{0}_{n,n} & JH_{n}^{\overline{p}} & \boldsymbol{0}_{n,n} \\
J(H_{n}^{\overline{p}}+H_{n}^{\overline{p}+1}) & -J^{2}H_{n}^{\overline{p}+1}
& H_{n}^{\overline{p}+1}%
\end{matrix}%
\right\vert \\
=&(-1)^{n}|H_{n}^{p+1}|^{2}\cdot |\Sigma _{n}^{p+1}|.
\end{flalign*}

\smallskip
\noindent
$(L8)$ Combine (\ref{eq:Kc}) and (\ref{h3n+1}), we have
\begin{flalign*}
	&\left\vert H_{3n+1}^{3p+2}\right\vert =|P^{t}H_{3n+1}^{3p+2}P| &\\
=&\left\vert
\begin{array}{ccc}
\boldsymbol{0}_{\overline{n},\overline{n}} & (H_{\overline{n}}^{\overline{p}%
})^{(\overline{n})} & (JH_{\overline{n}}^{\overline{p}})^{(\overline{n})} \\
(H_{\overline{n}}^{\overline{p}})_{(\overline{n})} & JH_{n}^{\overline{p}} &
\mathbf{0}_{n\times n} \\
(JH_{\overline{n}}^{\overline{p}})_{(\overline{n})} & \mathbf{0}_{n\times n}
& H_{n}^{\overline{p}+1}%
\end{array}%
\right\vert \\
=&\left\vert
\begin{array}{ccc}
\boldsymbol{0}_{\overline{n},\overline{n}} & (H_{\overline{n}}^{\overline{p}%
})^{(\overline{n})} & (JH_{\overline{n}}^{\overline{p}})^{(\overline{n})} \\
(H_{\overline{n}}^{\overline{p}})_{(\overline{n})} & JH_{n}^{\overline{p}} &
\mathbf{0}_{n\times n} \\
\boldsymbol{0}_{n,\overline{n}} & -J^{2}H_{n}^{\overline{p}} & H_{n}^{%
\overline{p}+1}%
\end{array}%
\right\vert \\
=&0.
\end{flalign*}

\smallskip
\noindent
    $(L9)$ Combine (\ref{eq:Kc}) and (\ref{h3n+2}), we have
    \begin{flalign*}
    	&	\left\vert \Sigma _{3n+2}^{3p+2}\right\vert =|P^{t}\Sigma _{3n+2}^{3p+2}P|&
    \\
    =&\left\vert
    \begin{array}{ccc}
    \boldsymbol{0}_{\overline{n},\overline{n}} & H_{\overline{n}}^{\overline{p}}
    & \mathbf{(}JH_{\overline{n}}^{\overline{p}}\mathbf{)}^{(\overline{n})} \\
    H_{\overline{n}}^{\overline{p}} & JH_{\overline{n}}^{\overline{p}} &
    \boldsymbol{0}_{\overline{n},n} \\
    \mathbf{(}JH_{\overline{n}}^{\overline{p}}\mathbf{)}_{(\overline{n})} &
    \boldsymbol{0}_{n,\overline{n}} & H_{n}^{p+2}%
    \end{array}%
    \right\vert \\
    =&\left\vert \left(
    \begin{array}{ccc}
    \boldsymbol{0}_{\overline{n},\overline{n}} & H_{\overline{n}}^{\overline{p}}
    & \mathbf{(}JH_{\overline{n}}^{\overline{p}}\mathbf{)}^{(\overline{n})} \\
    H_{\overline{n}}^{\overline{p}} & JH_{\overline{n}}^{\overline{p}} &
    \boldsymbol{0}_{\overline{n},n} \\
    \mathbf{(}JH_{\overline{n}}^{\overline{p}}\mathbf{)}_{(\overline{n})} &
    \boldsymbol{0}_{n,\overline{n}} & H_{n}^{p+2}%
    \end{array}%
    \right) \right.
     \left. \left(
    \begin{array}{ccc}
    I_{\overline{n},\overline{n}} & -JI_{\overline{n},\overline{n}} & J^{2}I_{%
    \overline{n},n} \\
    \boldsymbol{0}_{\overline{n},\overline{n}} & I_{\overline{n},\overline{n}} &
    -JI_{\overline{n},n} \\
    \boldsymbol{0}_{n,\overline{n}} & \boldsymbol{0}_{n,\overline{n}} & I_{n,n}%
    \end{array}%
    \right) \right\vert \\
    =&\left\vert
    \begin{array}{ccc}
    \boldsymbol{0}_{\overline{n},\overline{n}} & H_{\overline{n}}^{\overline{p}}
    & \mathbf{0}_{\overline{n},n} \\
    H_{\overline{n}}^{\overline{p}} & \boldsymbol{0}_{\overline{n},\overline{n}}
    & \boldsymbol{0}_{\overline{n},n} \\
    \mathbf{(}JH_{\overline{n}}^{\overline{p}}\mathbf{)}_{(\overline{n})} &
    \mathbf{-}J^{2}\mathbf{(}H_{\overline{n}}^{\overline{p}}\mathbf{)}_{(%
    \overline{n})} & H_{n}^{\overline{p}}+H_{n}^{\overline{p}+1}%
    \end{array}%
    \right\vert \\
    =&(-1)^{n+1}|H_{n+1}^{p+1}|^{2}\cdot |\Sigma _{n}^{p+1}|.
    \end{flalign*}

\section{Proof of equalities  $(L10)$-$(L18)$}\label{sec:proofkey2} 

Recall that the  sequence $\mathbf{s}=s_{0}s_{1}\cdots s_{n}\cdots \in
\mathcal{A}^{\mathbb{\infty }}$ is characterized by the recurrent
equations in (\ref{rec:s}),
and that
$\Sigma_{n}^{p}:=\Sigma_n^p(\boldsymbol s)$ (resp. $H_n^p$)
is the $(p,n)$-order Hankel matrix of   the sequence
$\boldsymbol{s}$ (resp. $\boldsymbol c$).
Let $K_{n}^{p}:=K_{n}^{p}(\mathbf s):=(s_{p+3(i+j-2)})_{1\leq i,j\leq n}$.
By (\ref{rec:s}), we have for all $n\geq 1,p\geq 0,$
\begin{equation}
K_{n}^{3p}=-J^{2}H_{n}^{p},\quad
K_{n}^{3p+1}=JH_{n}^{p},\quad
K_{n}^{3p+2}=H_{n}^{p+1}.
\label{eq:Ks}
\end{equation}
Equalities $(L10)$-$(L18)$ are proved by combining (\ref{eq:Ks}) and (\ref{h3n}-\ref{h3n+2}) where the sequence $\mathbf u$ is specialized to  $\mathbf s$.

\smallskip
\noindent
$(L10)$ Combine (\ref{eq:Ks}) and (\ref{h3n}), we have
\begin{flalign*}
	&\left\vert \Sigma _{3n}^{3p}\right\vert =|P^{t}\Sigma _{3n}^{3p}P| & \\
=&\left\vert
\begin{array}{ccc}
-J^{2}H_{n}^{p} & JH_{n}^{p} & H_{n}^{\overline{p}} \\
JH_{n}^{p} & H_{n}^{\overline{p}} & -J^{2}H_{n}^{\overline{p}} \\
H_{n}^{\overline{p}} & -J^{2}H_{n}^{\overline{p}} & JH_{n}^{\overline{p}}%
\end{array}%
\right\vert \\
=&\left\vert \left(
\begin{array}{ccc}
-J^{2}H_{n}^{p} & JH_{n}^{p} & H_{n}^{\overline{p}} \\
JH_{n}^{p} & H_{n}^{\overline{p}} & -J^{2}H_{n}^{\overline{p}} \\
H_{n}^{\overline{p}} & -J^{2}H_{n}^{\overline{p}} & JH_{n}^{\overline{p}}%
\end{array}%
\right) \left(
\begin{array}{ccc}
I_{n,n} & J^{2}I_{n,n} & \boldsymbol{0}_{n,n} \\
\boldsymbol{0}_{n,n} & I_{n,n} & J^{2}I_{n,n} \\
\boldsymbol{0}_{n,n} & \boldsymbol{0}_{n,n} & I_{n,n}%
\end{array}%
\right) \right\vert \\
=&\left\vert
\begin{array}{ccc}
-J^{2}H_{n}^{p} & \boldsymbol{0}_{n,n} & H_{n}^{p}+H_{n}^{\overline{p}} \\
JH_{n}^{p} & H_{n}^{p}+H_{n}^{\overline{p}} & \boldsymbol{0}_{n,n} \\
H_{n}^{\overline{p}} & \boldsymbol{0}_{n,n} & \boldsymbol{0}_{n,n}%
\end{array}%
\right\vert \\
=&(-1)^{n}|\Sigma _{n}^{p}|^{2}\cdot |H_{n}^{p+1}|.
\end{flalign*}

\smallskip
\noindent
$(L11)$ Combine (\ref{eq:Ks}) and (\ref{h3n+1}), we have
\begin{flalign*}
	&\left\vert \Sigma _{3n+1}^{3p}\right\vert =\left\vert P^{t}\Sigma
	_{3n+1}^{3p}P\right\vert& \\
=&\left\vert
\begin{array}{ccc}
-J^{2}H_{\overline{n}}^{p} & (JH_{\overline{n}}^{p})^{(\overline{n})} & (H_{%
\overline{n}}^{\overline{p}})^{(\overline{n})} \\
(JH_{\overline{n}}^{p})_{(\overline{n})} & H_{n}^{\overline{p}} &
-J^{2}H_{n}^{\overline{p}} \\
(H_{\overline{n}}^{\overline{p}})_{(\overline{n})} & -J^{2}H_{n}^{\overline{p%
}} & JH_{n}^{\overline{p}}%
\end{array}%
\right\vert \\
=&\left\vert \left(
\begin{array}{ccc}
-J^{2}H_{\overline{n}}^{p} & (JH_{\overline{n}}^{p})^{(\overline{n})} & (H_{%
\overline{n}}^{\overline{p}})^{(\overline{n})} \\
(JH_{\overline{n}}^{p})_{(\overline{n})} & H_{n}^{\overline{p}} &
-J^{2}H_{n}^{\overline{p}} \\
(H_{\overline{n}}^{\overline{p}})_{(\overline{n})} & -J^{2}H_{n}^{\overline{p%
}} & JH_{n}^{\overline{p}}%
\end{array}%
\right) \right. \\
&\qquad \times \left. \left(
\begin{array}{ccc}
I_{\overline{n},\overline{n}} & J^{2}(I_{\overline{n},\overline{n}})^{(%
\overline{n})} & J(I_{\overline{n},\overline{n}})^{(1)} \\
\boldsymbol{0}_{n,\overline{n}} & I_{n,n} & \boldsymbol{0}_{n,n} \\
\boldsymbol{0}_{n,\overline{n}} & \boldsymbol{0}_{n,n} & I_{n,n}%
\end{array}%
\right) \right\vert \\
=&\left\vert
\begin{array}{ccc}
-J^{2}H_{\overline{n}}^{p} & \boldsymbol{0}_{\overline{n},n} & \boldsymbol{0}%
_{\overline{n},n} \\
(JH_{\overline{n}}^{p})_{(\overline{n})} & H_{n}^{p}+H_{n}^{\overline{p}} &
\boldsymbol{0}_{n,n} \\
(H_{\overline{n}}^{\overline{p}})_{(\overline{n})} & \boldsymbol{0}_{n,n} &
J(H_{n}^{\overline{p}}+H_{n}^{\overline{p}+1})%
\end{array}%
\right\vert \\
=&(-1)^{n+1}J^{2}|H_{n+1}^{p}|\cdot |\Sigma _{n}^{p}|\cdot \left\vert
\Sigma _{n}^{p+1}\right\vert .
\end{flalign*}

\smallskip
\noindent
$(L12)$ Combine (\ref{eq:Ks}) and (\ref{h3n+2}), we have
\begin{flalign*}
	&\left\vert \Sigma _{3n+2}^{3p}\right\vert =\left\vert P^{t}\Sigma
	_{3n+2}^{3p}P\right\vert & \\
=&\left\vert
\begin{array}{ccc}
-J^{2}H_{\overline{n}}^{p} & JH_{\overline{n}}^{p} & \left( H_{\overline{n}%
}^{\overline{p}}\right) ^{(\overline{n})} \\
JH_{\overline{n}}^{p} & H_{\overline{n}}^{\overline{p}} & -J^{2}\left( H_{%
\overline{n}}^{\overline{p}}\right) ^{(\overline{n})} \\
\left( H_{\overline{n}}^{\overline{p}}\right) _{(\overline{n})} &
-J^{2}\left( H_{\overline{n}}^{\overline{p}}\right) _{(\overline{n})} &
JH_{n}^{\overline{p}}%
\end{array}%
\right\vert \\
=&\left\vert \left(
\begin{array}{ccc}
-J^{2}H_{\overline{n}}^{p} & JH_{\overline{n}}^{p} & \left( H_{\overline{n}%
}^{\overline{p}}\right) ^{(\overline{n})} \\
JH_{\overline{n}}^{p} & H_{\overline{n}}^{\overline{p}} & -J^{2}\left( H_{%
\overline{n}}^{\overline{p}}\right) ^{(\overline{n})} \\
\left( H_{\overline{n}}^{\overline{p}}\right) _{(\overline{n})} &
-J^{2}\left( H_{\overline{n}}^{\overline{p}}\right) _{(\overline{n})} &
JH_{n}^{\overline{p}}%
\end{array}%
\right) \right. \\
&\qquad \times \left. \left(
\begin{array}{ccc}
I_{\overline{n},\overline{n}} & J^{2}I_{\overline{n},\overline{n}} & J(I_{%
\overline{n},\overline{n}})^{(1)} \\
\boldsymbol{0}_{\overline{n},\overline{n}} & I_{\overline{n},\overline{n}} &
\boldsymbol{0}_{\overline{n},n} \\
\boldsymbol{0}_{n,\overline{n}} & \boldsymbol{0}_{n,n+1} & I_{n,n}%
\end{array}%
\right) \right\vert \\
=&\left\vert
\begin{array}{ccc}
-J^{2}H_{\overline{n}}^{p} & \boldsymbol{0}_{\overline{n},\overline{n}} &
\boldsymbol{0}_{\overline{n},n} \\
JH_{\overline{n}}^{p} & H_{\overline{n}}^{\overline{p}} & \boldsymbol{0}_{%
\overline{n},n} \\
\left( H_{\overline{n}}^{\overline{p}}\right) _{(\overline{n})} &
\boldsymbol{0}_{n,\overline{n}} & J(H_{n}^{\overline{p}}+H_{n}^{\overline{p}%
+1})%
\end{array}%
\right\vert \\
=&(-1)^{n+1}J^{2}|H_{n+1}^{p}|\cdot |\Sigma _{n+1}^{p}|\cdot |\Sigma
_{n}^{p+1}|.
\end{flalign*}

\smallskip
\noindent
$(L13)$ Combine (\ref{eq:Ks}) and (\ref{h3n}), we have
\begin{flalign*}
	&	\left\vert \Sigma _{3n}^{3p+1}\right\vert =\left\vert P^{t}\Sigma
	_{3n}^{3p+1}P\right\vert & \\
 =&\left\vert
\begin{matrix}
JH_{n}^{p} & H_{n}^{\overline{p}} & \mathbf{-}J^{2}H_{n}^{\overline{p}} \\
H_{n}^{\overline{p}} & \mathbf{-}J^{2}H_{n}^{\overline{p}} & JH_{n}^{%
\overline{p}} \\
\mathbf{-}J^{2}H_{n}^{\overline{p}} & JH_{n}^{\overline{p}} & H_{n}^{%
\overline{p}+1}%
\end{matrix}%
\right\vert \\
=&\left\vert \left(
\begin{matrix}
JH_{n}^{p} & H_{n}^{\overline{p}} & \mathbf{-}J^{2}H_{n}^{\overline{p}} \\
H_{n}^{\overline{p}} & \mathbf{-}J^{2}H_{n}^{\overline{p}} & JH_{n}^{%
\overline{p}} \\
\mathbf{-}J^{2}H_{n}^{\overline{p}} & JH_{n}^{\overline{p}} & H_{n}^{%
\overline{p}+1}%
\end{matrix}%
\right) \left(
\begin{array}{ccc}
I_{n,n} & \mathbf{0}_{n\times n} & \mathbf{0}_{n\times n} \\
JI_{n,n} & I_{n,n} & J^{2}I_{n,n} \\
\mathbf{0}_{n\times n} & \mathbf{0}_{n\times n} & I_{n,n}%
\end{array}%
\right) \right\vert \\
=&\left\vert
\begin{matrix}
J(H_{n}^{p}+H_{n}^{\overline{p}}) & H_{n}^{\overline{p}} & \mathbf{0}%
_{n\times n} \\
\mathbf{0}_{n\times n} & \mathbf{-}J^{2}H_{n}^{\overline{p}} & \mathbf{0}%
_{n\times n} \\
\mathbf{0}_{n\times n} & JH_{n}^{\overline{p}} & H_{n}^{\overline{p}}+H_{n}^{%
\overline{p}+1}%
\end{matrix}%
\right\vert \\
=&(-1)^{n}|H_{n}^{p+1}|\cdot |\Sigma _{n}^{p}|\cdot |\Sigma _{n}^{p+1}|.
\end{flalign*}

\smallskip
\noindent
$(L14)$ Combine (\ref{eq:Ks}) and (\ref{h3n+1}), we have
\begin{flalign*}
	&\left\vert \Sigma _{3n+1}^{3p+1}\right\vert =\left\vert P^{t}\Sigma
	_{3n+1}^{3p+1}P\right\vert & \\
=&\left\vert
\begin{matrix}
JH_{\overline{n}}^{p} & (H_{\overline{n}}^{\overline{p}})^{(\overline{n})} &
\mathbf{-}J^{2}(H_{\overline{n}}^{\overline{p}})^{(\overline{n})} \\
(H_{\overline{n}}^{\overline{p}})_{(\overline{n})} & \mathbf{-}J^{2}H_{n}^{%
\overline{p}} & JH_{n}^{\overline{p}} \\
\mathbf{-}J^{2}(H_{\overline{n}}^{\overline{p}})_{(\overline{n})} & JH_{n}^{%
\overline{p}} & H_{n}^{p+2}%
\end{matrix}%
\right\vert \\
=&\left\vert \left(
\begin{matrix}
JH_{\overline{n}}^{p} & (H_{\overline{n}}^{\overline{p}})^{(\overline{n})} &
\mathbf{-}J^{2}(H_{\overline{n}}^{\overline{p}})^{(\overline{n})} \\
(H_{\overline{n}}^{\overline{p}})_{(\overline{n})} & \mathbf{-}J^{2}H_{n}^{%
\overline{p}} & JH_{n}^{\overline{p}} \\
\mathbf{-}J^{2}(H_{\overline{n}}^{\overline{p}})_{(\overline{n})} & JH_{n}^{%
\overline{p}} & H_{n}^{p+2}%
\end{matrix}%
\right) \right. \\
&\qquad \times \left. \left(
\begin{array}{ccc}
I_{n,n} & -J^{2}(I_{\overline{n},\overline{n}})^{(1)} & J(I_{\overline{n},%
\overline{n}})^{(1)} \\
\boldsymbol{0}_{n,\overline{n}} & I_{n,n} & \boldsymbol{0}_{n,n} \\
\boldsymbol{0}_{n,\overline{n}} & \boldsymbol{0}_{n,n} & I_{n,n}%
\end{array}%
\right) \right\vert \\
=&\left\vert
\begin{matrix}
JH_{\overline{n}}^{p} & \boldsymbol{0}_{n+1,n} & \boldsymbol{0}_{\overline{n}%
,n} \\
(H_{\overline{n}}^{\overline{p}})_{(\overline{n})} & \mathbf{-}J^{2}(H_{n}^{%
\overline{p}}+H_{n}^{\overline{p}+1}) & J(H_{n}^{\overline{p}}+H_{n}^{%
\overline{p}+1}) \\
\mathbf{-}J^{2}(H_{\overline{n}}^{\overline{p}})_{(\overline{n})} & J(H_{n}^{%
\overline{p}}+H_{n}^{\overline{p}+1}) & \boldsymbol{0}_{n,n}%
\end{matrix}%
\right\vert \\
=&(-1)^{n}J|H_{n+1}^{p}|\cdot |\Sigma _{n}^{p+1}|^{2}.
\end{flalign*}

\smallskip
\noindent
$(L15)$ Combine (\ref{eq:Ks}) and (\ref{h3n+2}), we have
\begin{flalign*}
	&\left\vert \Sigma _{3n+2}^{3p+1}\right\vert =\left\vert P^{t}\Sigma
_{3n+2}^{3p+1}P\right\vert \\
=&\left\vert
\begin{array}{ccc}
JH_{\overline{n}}^{p} & H_{\overline{n}}^{\overline{p}} & -J^{2}\left( H_{%
\overline{n}}^{\overline{p}}\right) ^{(\overline{n})} \\
H_{\overline{n}}^{\overline{p}} & -J^{2}H_{\overline{n}}^{\overline{p}} &
J\left( H_{\overline{n}}^{\overline{p}}\right) ^{(\overline{n})} \\
-J^{2}\left( H_{\overline{n}}^{\overline{p}}\right) _{(\overline{n})} &
J\left( H_{\overline{n}}^{\overline{p}}\right) _{(\overline{n})} & H_{n}^{%
\overline{p}+1}%
\end{array}%
\right\vert \\
=&\left\vert \left(
\begin{array}{ccc}
JH_{\overline{n}}^{p} & H_{\overline{n}}^{\overline{p}} & -J^{2}\left( H_{%
\overline{n}}^{\overline{p}}\right) ^{(\overline{n})} \\
H_{\overline{n}}^{\overline{p}} & -J^{2}H_{\overline{n}}^{\overline{p}} &
J\left( H_{\overline{n}}^{\overline{p}}\right) ^{(\overline{n})} \\
-J^{2}\left( H_{\overline{n}}^{\overline{p}}\right) _{(\overline{n})} &
J\left( H_{\overline{n}}^{\overline{p}}\right) _{(\overline{n})} & H_{n}^{%
\overline{p}+1}%
\end{array}%
\right) \right. \\
&\qquad \times \left. \left(
\begin{array}{ccc}
I_{\overline{n},\overline{n}} & \boldsymbol{0}_{\overline{n},\overline{n}} &
\boldsymbol{0}_{\overline{n},n} \\
JI_{\overline{n},\overline{n}} & I_{\overline{n},\overline{n}} & J^{2}(I_{%
\overline{n},\overline{n}})^{(\overline{n})} \\
\boldsymbol{0}_{n,\overline{n}} & \boldsymbol{0}_{n,\overline{n}} & I_{n,n}%
\end{array}%
\right) \right\vert \\
=&\left\vert
\begin{array}{ccc}
J(H_{\overline{n}}^{p}+H_{\overline{n}}^{\overline{p}}) & H_{\overline{n}}^{%
\overline{p}} & \boldsymbol{0}_{\overline{n},n} \\
\boldsymbol{0}_{\overline{n},\overline{n}} & -J^{2}H_{\overline{n}}^{%
\overline{p}} & \boldsymbol{0}_{\overline{n},n} \\
\boldsymbol{0}_{n,\overline{n}} & J\left( H_{\overline{n}}^{\overline{p}%
}\right) _{(\overline{n})} & H_{n}^{\overline{p}}+H_{n}^{\overline{p}+1}%
\end{array}%
\right\vert \\
=&(-1)^{n+1}|H_{n+1}^{p+1}|\cdot |\Sigma _{n+1}^{p}|\cdot |\Sigma
_{n}^{p+1}|. &
\end{flalign*}

\smallskip
\noindent
$(L16)$ Combine (\ref{eq:Ks}) and (\ref{h3n}), we have
\begin{flalign*}
&\left\vert \Sigma _{3n}^{3p+2}\right\vert =\left\vert P^{t}\Sigma
_{3n}^{3p+2}P\right\vert \\
=&\left\vert
\begin{matrix}
H_{n}^{\overline{p}} & -J^{2}H_{n}^{\overline{p}} & JH_{n}^{\overline{p}} \\
-J^{2}H_{n}^{\overline{p}} & JH_{n}^{\overline{p}} & H_{n}^{\overline{p}+1}
\\
JH_{n}^{\overline{p}} & H_{n}^{\overline{p}+1} & -J^{2}H_{n}^{\overline{p}+1}%
\end{matrix}%
\right\vert \\
=&\left\vert \left(
\begin{matrix}
H_{n}^{\overline{p}} & -J^{2}H_{n}^{\overline{p}} & JH_{n}^{\overline{p}} \\
-J^{2}H_{n}^{\overline{p}} & JH_{n}^{\overline{p}} & H_{n}^{\overline{p}+1}
\\
JH_{n}^{\overline{p}} & H_{n}^{\overline{p}+1} & -J^{2}H_{n}^{\overline{p}+1}%
\end{matrix}%
\right) \left(
\begin{array}{ccc}
I_{n,n} & \boldsymbol{0}_{n,n} & -JI_{n,n} \\
\boldsymbol{0}_{n,n} & I_{n,n} & \boldsymbol{0}_{n,n} \\
\boldsymbol{0}_{n,n} & JI_{n,n} & I_{n,n}%
\end{array}%
\right) \right\vert \\
=&\left\vert
\begin{matrix}
H_{n}^{\overline{p}} & \boldsymbol{0}_{n,n} & \boldsymbol{0}_{n,n} \\
-J^{2}H_{n}^{\overline{p}} & JH_{n}^{\overline{p}} & H_{n}^{\overline{p}%
}+H_{n}^{\overline{p}+1} \\
JH_{n}^{\overline{p}} & \boldsymbol{0}_{n,n} & -J^{2}(H_{n}^{\overline{p}%
}+H_{n}^{\overline{p}+1})%
\end{matrix}%
\right\vert \\
=&(-1)^{n}|\Sigma _{n}^{p+1}|^{2}\cdot |H_{n}^{p+1}|. &
\end{flalign*}

\smallskip
\noindent
$(L17)$ Combine (\ref{eq:Ks}) and (\ref{h3n+1}), we have
\begin{flalign*}
	&\left\vert \Sigma _{3n+1}^{3p+2}\right\vert =\left\vert P^{t}\Sigma
_{3n+1}^{3p+2}P\right\vert \\
=&\left\vert
\begin{matrix}
H_{\overline{n}}^{\overline{p}} & -J^{2}\left( H_{\overline{n}}^{\overline{p}%
}\right) ^{(\overline{n})} & J\left( H_{\overline{n}}^{\overline{p}}\right)
^{(\overline{n})} \\
-J^{2}\left( H_{\overline{n}}^{\overline{p}}\right) _{(\overline{n})} &
JH_{n}^{\overline{p}} & H_{n}^{\overline{p}+1} \\
J\left( H_{\overline{n}}^{\overline{p}}\right) _{(\overline{n})} & H_{n}^{%
\overline{p}+1} & -J^{2}H_{n}^{\overline{p}+1}%
\end{matrix}%
\right\vert \\
=&\left\vert \left(
\begin{matrix}
H_{\overline{n}}^{\overline{p}} & -J^{2}\left( H_{\overline{n}}^{\overline{p}%
}\right) ^{(\overline{n})} & J\left( H_{\overline{n}}^{\overline{p}}\right)
^{(\overline{n})} \\
-J^{2}\left( H_{\overline{n}}^{\overline{p}}\right) _{(\overline{n})} &
JH_{n}^{\overline{p}} & H_{n}^{\overline{p}+1} \\
J\left( H_{\overline{n}}^{\overline{p}}\right) _{(\overline{n})} & H_{n}^{%
\overline{p}+1} & -J^{2}H_{n}^{\overline{p}+1}%
\end{matrix}%
\right) \right. \\
&\qquad \times \left. \left(
\begin{array}{ccc}
I_{\overline{n},\overline{n}} & \boldsymbol{0}_{\overline{n},n} & -J(I_{%
\overline{n},\overline{n}})^{(\overline{n})} \\
\boldsymbol{0}_{n,\overline{n}} & I_{n,n} & \boldsymbol{0}_{n,n} \\
\boldsymbol{0}_{n,\overline{n}} & JI_{n,n} & I_{n,n}%
\end{array}%
\right) \right\vert \\
=&\left\vert
\begin{matrix}
H_{\overline{n}}^{\overline{p}} & \boldsymbol{0}_{\overline{n},n} &
\boldsymbol{0}_{\overline{n},n} \\
-J^{2}\left( H_{\overline{n}}^{\overline{p}}\right) _{(\overline{n})} &
J(H_{n}^{\overline{p}}+H_{n}^{\overline{p}+1}) & H_{n}^{\overline{p}}+H_{n}^{%
\overline{p}+1} \\
J\left( H_{\overline{n}}^{\overline{p}}\right) _{(\overline{n})} &
\boldsymbol{0}_{n,n} & -J^{2}(H_{n}^{\overline{p}}+H_{n}^{\overline{p}+1})%
\end{matrix}%
\right\vert \\
=&(-1)^{n}|\Sigma _{n}^{p+1}|^{2}\cdot |H_{n+1}^{p+1}|. &
\end{flalign*}

\smallskip
\noindent
$(L18)$ Combine (\ref{eq:Ks}) and (\ref{h3n+2}), we have
\begin{flalign*}
	&\left\vert \Sigma _{3n+2}^{3p+2}\right\vert =\left\vert P^{t}\Sigma
_{3n+2}^{3p+2}P\right\vert \\
=&\left\vert
\begin{matrix}
H_{\overline{n}}^{\overline{p}} & -J^{2}H_{\overline{n}}^{\overline{p}} &
J\left( H_{\overline{n}}^{\overline{p}}\right) ^{(\overline{n})} \\
-J^{2}H_{\overline{n}}^{\overline{p}} & JH_{\overline{n}}^{\overline{p}} &
\left( H_{\overline{n}}^{\overline{p}+1}\right) ^{(\overline{n})} \\
J\left( H_{\overline{n}}^{\overline{p}}\right) _{(\overline{n})} & \left( H_{%
\overline{n}}^{\overline{p}+1}\right) _{(\overline{n})} & -J^{2}H_{n}^{%
\overline{p}+1}%
\end{matrix}%
\right\vert \\
=&\left\vert
\begin{matrix}
H_{\overline{n}}^{\overline{p}} & \boldsymbol{0}_{\overline{n},\overline{n}}
& J\left( H_{\overline{n}}^{\overline{p}}\right) ^{(\overline{n})} \\
-J^{2}H_{\overline{n}}^{\overline{p}} & \boldsymbol{0}_{\overline{n},%
\overline{n}} & \left( H_{\overline{n}}^{\overline{p}+1}\right) ^{(\overline{%
n})} \\
J\left( H_{\overline{n}}^{\overline{p}}\right) _{(\overline{n})} & \left(
\Sigma _{\overline{n}}^{\overline{p}}\right) _{(\overline{n})} &
-J^{2}H_{n}^{\overline{p}+1}%
\end{matrix}%
\right\vert \\
=&0. &
\end{flalign*}


\goodbreak


\end{document}